\newcommand{\R}{\mathbb{R}}
\newcommand{\E}{\mathbb{E}}
\newcommand{\Z}{\mathbb{Z}}
\newcommand{\N}{\mathbb{N}}
\newcommand{\Prob}{\mathbb{P}}
\newtheorem{thm}{Theorem}[section]
\newtheorem{lemma}{Lemma}[section]
\begin{document}

{\centerline{ \LARGE{On the fluctuations of Internal DLA on the Sierpinski gasket graph}}}~\\[-2mm]

\centerline{\large{Nico Heizmann}}
\centerline{\textit{ Technische Universität Chemnitz}} ~\\

%
%

\begin{abstract}
	Internal diffusion limited aggregation (IDLA) is a random aggregation model on a graph $G$, whose clusters are formed by random walks started in the origin (some fixed vertex) and stopped upon visiting a previously unvisited site. On the Sierpinski gasket graph the asymptotic shape is known to be a ball in the usual graph metric. In this paper we establish bounds for the fluctuations of the cluster from its asymptotic shape.
\end{abstract}
\textit{2010 Mathematics Subject Classification.} 82C24, 60G50, 60J10, 31C05, 28A80 \\[2pt]
\textit{Keywords.} Internal DLA, Sierpinski gasket graph, fluctuations, growth model, Green function

\section{Introduction}
The \textit{internal diffusion limited aggregation model} (\textit{IDLA}), which was introduced by Diaconis and Fulton in \cite{growthAlgebra}, is a stochastic aggregation model growing by consecutively started particles, where a particle is added upon exiting the current cluster.
Let $G$ be an infinite but locally finite connected graph with a specified vertex $\circ$ acting as the origin of these particles. Let $\big(X^1(t)\big)_{t\geq 0}, \big(X^2(t)\big)_{t\geq 0}, \ldots$ be a sequence of iid simple random walks on $G$ started in $\circ$ representing the particles. The IDLA cluster $\mathcal{I}(i)$ after $i\geq 1$ particles is now iteratively defined as 
\begin{align*}
	&\mathcal{I}(0) = \emptyset, &\mathcal{I}(i) = \mathcal{I}(i-1) \cup \{ X^i(\sigma^i) \}
\end{align*}
where $\sigma^i = \inf\{ t\geq 0 | X^i(t) \notin \mathcal{I}(i-1) \}$ is the time particle $i$ hits the outer boundary of the already existing cluster.

A typical question concerning IDLA is the typical shape of the random set $\mathcal{I}(i)$ for large $i$: On $\Z^d$ Lawler, Bramson and Griffeath \cite{lawler1992} identified the limit shape as a euclidean ball and later in \cite{lawlerSubdiffusive} Lawler improved the bounds for the fluctuations. Finally, later in \cite{AsselahD3} and in \cite{jerisonD3} these bounds were further improved to sublogarithmic bounds for $d\geq 3$ and logarithmic ones for $d=2$ in \cite{Asselah_2013} and \cite{jerisonD2} via two different approaches.

In this work we will use the ideas from \cite{lawlerSubdiffusive} to improve the fluctuation bounds for the Sierpinski gasket graph, whose limit shape is already identified as a ball in the graph metric, see \cite{idlaSG}.
The main improvement from the already existing result is based on the analysis of the sandpile model, which is a different aggregation model. We will use the exact calculation of the odometer function of the sandpile model from \cite{HSH17} to develop an improved inner bound and the outer bound  then follows analogously from \cite{idlaSG}.

Since it avoids unnecessary technicalities, we work on the double sided Sierpinski gasket graph $SG$ as this gives us a regular graph, whose vertices share degree $4$.
\begin{figure}[h]
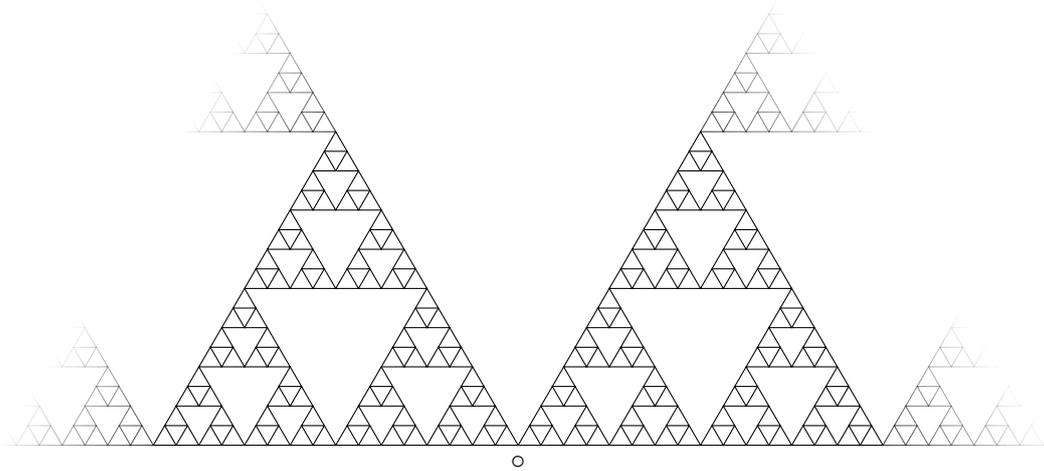

	\include{tikzSG}
\caption{The doubled Sierpinski gasket graph}
\end{figure}
For the construction of $SG$ let
\begin{align*}
	&V_0 := \Big\{ (0,0), (1,0), (1/2,\sqrt{3}/2) \Big\}, \\
	&E_0 := \Big\{ \big((0,0),(1,0)\big),\big((1,0),(1/2,\sqrt{3}/2)\big),\big((0,0),(1/2,\sqrt{3}/2)\big) \Big\}
\end{align*}
and then recursively 
\begin{align*}
	V_{n+1} &= V_n \cup \bigg((2^n,0)+V_n \bigg) \cup \bigg((2^{n-1}, 2^{n-1} \sqrt{3}) + V_n \bigg) \\
	E_{n+1} &= E_n \cup \bigg((2^n,0)+E_n \bigg) \cup \bigg((2^{n-1}, 2^{n-1} \sqrt{3}) + E_n \bigg)
\end{align*}

where $(x,y)+S = \{(x,y)+z | z\in S\}$. The Sierpinski gasket graph is defined by $(V_\infty = \bigcup_{n\in\N} V_n, E_\infty = \bigcup_{n\in\N} E_n)$ and from this we get the double sided Sierpinski gasket graph $SG$ by adding an at the y-axis reflected copy $SG = (-V_\infty \cup V_\infty, -E_\infty \cup E_\infty)$.

Set $\circ=(0,0)$ as the point the cluster grows around and denote by $B_x(n)$ the closed balls of radius $n$ around vertex $x$ in the usual graph metric and the volume of balls around $\circ$ as $b_n:=|B_\circ(n)|$. 

In \cite{idlaSG} Chen, Huss, Sava-Huss and Teplayev were able to proof a basic shape result on $SG$.
\newpage
\begin{thm}[{\cite[Theorem 1.1]{idlaSG}}]
	On $SG$ the IDLA cluster after $b_n=|B_\circ(n)|$ particles occupies a set of sites close to ball of radius $n$. That is, for all $\varepsilon>0$, we have
	\begin{align*}
		B_\circ(n(1-\varepsilon)) \subseteq \mathcal{I}(b_n) \subseteq B_\circ(n(1+\varepsilon)) \text{ with probability }1.
	\end{align*}
\end{thm}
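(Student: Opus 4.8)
The plan is to follow the classical strategy of Lawler, Bramson and Griffeath \cite{lawler1992}, adapting it from $\Z^d$ to $SG$ by feeding in the sub-Gaussian random walk estimates available for the gasket. I would prove the two inclusions separately, and for each fixed $\varepsilon$ show that the probability of failure at scale $n$ decays fast enough in $n$ to be summable; a union bound over the at most $b_n$ candidate vertices together with Borel--Cantelli then upgrades this to an almost sure statement, and intersecting over a sequence $\varepsilon\downarrow 0$ gives the claim. Throughout I write $\sigma_z$ for the hitting time of a vertex $z$, $\tau_n$ for the exit time of $B_\circ(n)$, and $G_n(x,y)$ for the Green function of simple random walk killed on leaving $B_\circ(n)$; since $SG$ is $4$-regular, $G_n$ is symmetric and satisfies $\Prob_y[\sigma_z<\tau_n]=G_n(y,z)/G_n(z,z)$ and $\sum_{y}G_n(y,z)=\E_z[\tau_n]$.

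For the inner bound fix $z$ with $d(\circ,z)\le n(1-\varepsilon)$. Extend each of the $b_n$ IDLA walks past its stopping time $\sigma^i$ to a walk killed on $\partial B_\circ(n)$, and let $N$ be the number of these walks that hit $z$ before leaving $B_\circ(n)$. The combinatorial heart of the argument is the observation that on the event $\{z\notin\mathcal I(b_n)\}$ no walk can visit $z$ at or before its own stopping time (otherwise $z$ would be added to the cluster), so every one of the $N$ visits happens strictly after the corresponding walk has been absorbed; writing $M$ for the number of walks whose post-absorption continuation reaches $z$ before $\partial B_\circ(n)$, this gives $N\le M$ on $\{z\notin\mathcal I(b_n)\}$. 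Now $N$ is a sum of $b_n$ i.i.d.\ indicators with $\E N=b_n\,G_n(\circ,z)/G_n(z,z)$, while, conditioning on the (distinct) absorption sites, which enumerate $\mathcal I(b_n)$, and using the Green function identities above,
\[
\E\big[M\,\big|\,\text{absorption sites}\big]=\frac{1}{G_n(z,z)}\sum_{y\in\mathcal I(b_n)}G_n(y,z)\le \frac{\E_z[\tau_n]}{G_n(z,z)}.
\]
Thus the inner bound reduces to showing $b_n\,G_n(\circ,z)\ge(1+\delta)\,\E_z[\tau_n]$ for some $\delta>0$, uniformly over $z$ in the inner ball, and then running a concentration estimate: $N$ concentrates around its mean by a Chernoff bound and $M$ around its (smaller) conditional mean by a martingale argument in the walk index, so that $\Prob[N\le M]$ is exponentially small in a power of $n$.

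The decisive analytic input is therefore sharp two-sided control of $G_n(\circ,z)$, of the diagonal $G_n(z,z)$, and of the mean exit time $\E_z[\tau_n]$ on $SG$. These follow from the known resistance and heat-kernel estimates for the gasket, which give the volume growth $b_n\asymp n^{d_f}$ with $d_f=\log 3/\log 2$ and the walk-dimension scaling $\E_z[\tau_n]\asymp n^{d_w}$ with $d_w=\log 5/\log 2$, together with matching $G_n(x,y)\asymp d(x,y)^{d_w-d_f}$-type bounds away from the boundary; plugging these in makes both $b_n\,G_n(\circ,z)$ and $\E_z[\tau_n]$ of order $n^{d_w}$, and the requirement $d(\circ,z)\le n(1-\varepsilon)$ is exactly what should produce the strict multiplicative gap $1+\delta$. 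I expect this Green function comparison to be the main obstacle: extracting an honest gap from the scaling estimates uniformly in $z$, and handling the a priori unknown shape of $\mathcal I(b_n)$ in the bound for $M$ (for instance by first establishing a crude linear-order outer bound, or by intersecting with the outer-bound event so that $\mathcal I(b_n)\subseteq B_\circ(n)$ may be assumed). The outer bound $\mathcal I(b_n)\subseteq B_\circ(n(1+\varepsilon))$ is then obtained by the dual estimate: for $z$ with $d(\circ,z)\ge n(1+\varepsilon)$ one bounds $\Prob[z\in\mathcal I(b_n)]$ by the expected number of walks that reach the far sphere through $z$, which the same hitting-probability estimates render summably small. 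This part follows the route of \cite{idlaSG} and is the less delicate of the two, with the combinatorial comparison and the Borel--Cantelli steps being routine once the analytic estimates are in place.
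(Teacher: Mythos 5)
Your skeleton is the right one -- it is the Lawler--Bramson--Griffeath comparison that \cite{idlaSG} (where this theorem is proved; the present paper only quotes it) and Sections 4--5 of this paper both run -- but the proposal has a genuine gap at exactly the point you flag as ``the main obstacle,'' and flagging it is not the same as closing it. The inner bound needs $b_n G_n(\circ,z)-\E_z[\tau_n]$ to be not merely positive but comparable to $\E N$ (or at least much larger than $(\E N)^{1/2}$), uniformly over $z\in B_\circ(n(1-\varepsilon))$. Two-sided scaling estimates $b_n G_n(\circ,z)\asymp n^{\beta}\asymp\E_z[\tau_n]$ with uncontrolled constants cannot produce this: the implied constants could make the difference negative, and no amount of concentration rescues the argument then. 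A concrete positivity mechanism is required. The present paper supplies one via the divisible sandpile: $|B_\circ(n)|\,g_n(\circ,z)-\sum_{y\in B_\circ(n)}g_n(y,z)=u(z)$, because both sides solve the same Dirichlet problem on $B_\circ(n)$, and then $u(z)\ge c\,d(z,\partial_I B_\circ(n))^{\beta}$ from the exact odometer computation of \cite{HSH17}; the original proof in \cite{idlaSG} instead extracts the gap from the elliptic Harnack inequality. Either route must be carried out; your proposal contains neither.

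The outer bound is also not ``the less delicate'' part on the gasket, and the dual first-moment estimate you propose fails. As the paper notes at the end of Section \ref{sectionInnerBound}, for corner vertices $z$ of proper triangles at distance $2^m$ from $\circ$ one has $\Prob_\circ(\tau_z<\tau(n))\ge\tfrac12$ for all $n>2^m$: hitting probabilities of individual far vertices are \emph{not} uniformly small on $SG$, so bounding $\Prob(z\in\mathcal{I}(b_n))$ by the expected number of walks reaching $z$ and summing over $z\notin B_\circ(n(1+\varepsilon))$ does not give anything summable. The actual argument (reproduced in the paper's final section, following \cite[Section 3.2]{idlaSG}) stops all particles on $\partial_I B_\circ(n)$, invokes the abelian property to restart the $k_j$ paused particles, and uses the proportional-settling estimate of Lemma \ref{outBoundHilfLemma} to show that each restart consumes a fixed fraction of the paused particles while advancing the radius by only $k_j^{1/\alpha}$, so that the total overshoot $\sum_j k_j^{1/\alpha}$ is a geometrically convergent series controlled by $k_0$, which in turn is controlled by the inner bound. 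You would need this iteration (or an equivalent one) in place of the hitting-probability union bound.
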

Our main result improves the linear bounds from \cite{idlaSG} to sublinear bounds similar to the ones in \cite{lawlerSubdiffusive}.

\begin{thm}\label{mainThm}
	On $SG$ the IDLA cluster after $b_n=|B_\circ(n)|$ particles satisfies with probability 1 for any $\kappa>0$, some constant $c>0$ and $n$ large enough
	\begin{align*}
		B_\circ(n-c n^\frac{1}{2}\ln(n)^\frac{1+\kappa}{2\alpha}) \subseteq \mathcal{I}(b_n) \subseteq B_\circ(n+c\ n^{\frac{1}{2}+\frac{1}{2\alpha}}\ln(n)^{(1-\frac{1}{\alpha})\frac{1+\kappa}{2\alpha}}),
	\end{align*}
	where $\alpha=\frac{\log(3)}{\log(2)}$ is the Hausdorff dimension of the Sierpinski gasket.
\end{thm}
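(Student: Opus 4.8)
The plan is to adapt the martingale comparison of \cite{lawlerSubdiffusive} to $SG$ and to replace the crude potential-theoretic input of \cite{idlaSG} by the exact odometer computation of \cite{HSH17}, which is what upgrades the linear fluctuation bound to the sublinear one. I treat the two inclusions separately. For the inner bound, fix a vertex $z$ with $d(\circ,z)=n-a$, where $a$ is of the order $n^{1/2}\ln(n)^{(1+\kappa)/(2\alpha)}$ and will be pinned down only at the end. The starting observation is combinatorial: since each particle freezes at the first site it meets outside the current cluster, a vertex belongs to $\mathcal{I}(b_n)$ precisely when some particle visits it before being stopped, so $\{z\notin\mathcal{I}(b_n)\}$ is exactly the event that the IDLA odometer at $z$ vanishes. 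I would then run all $b_n$ walks to completion, continuing each one after its stopping time as an independent walk until it leaves the fixed ball $B_\circ(2n)$. Writing $N_z$ for the total number of visits to $z$ by the completed walks and $R_z$ for those visits accrued after the stopping times, $N_z$ is a sum of $b_n$ i.i.d.\ contributions with mean $\mu_z=b_n\,g(\circ,z)$, where $g$ is the Green function of the walk killed on exiting $B_\circ(2n)$, while the IDLA odometer at $z$ equals $N_z-R_z$. On the bad event this difference is zero, so with $\Delta_z=\mu_z-\E[R_z]>0$ one gets
\begin{align*}
\{z\notin\mathcal{I}(b_n)\}\subseteq\big\{\,|N_z-\E[N_z]|\ge\tfrac12\Delta_z\,\big\}\cup\big\{\,|R_z-\E[R_z]|\ge\tfrac12\Delta_z\,\big\}.
\end{align*}

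The heart of the matter is that the mean gap $\Delta_z$ is, up to a controlled error, the value at $z$ of the divisible sandpile odometer obtained by emitting mass $b_n$ at $\circ$ and stabilising on $B_\circ(n)$. Indeed, by the shape theorem quoted above the continuation walks start on the cluster boundary, which is localised near $\partial B_\circ(n)$, so the deficit $\mu_z-\E[R_z]$ is exactly the expected IDLA odometer, which to leading order coincides with the sandpile odometer. Here I would insert the explicit formula of \cite{HSH17}: its sharp near-boundary behaviour gives a clean lower bound on $\Delta_z$ as a function of the depth $a$, with the subdiffusive geometry of $SG$ (walk dimension $d_w=\log 5/\log 2$, volume exponent $\alpha$) entering through the resistance scaling. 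This is the single place where the present method gains over \cite{idlaSG}. With the gap secured, a Bernstein/Chernoff estimate bounds the lower tail of the i.i.d.\ sum $N_z$ by an exponential in $\Delta_z^2/\mathrm{Var}(N_z)$, while $R_z$, although built from walks whose starting points are random, is handled by conditioning on the stopping configuration and using stochastic domination by independent boundary walks. Choosing $a$ so that both tails are $\le n^{-\alpha}\ln(n)^{-(1+\kappa)}$, a union bound over the $\lesssim b_n\asymp n^{\alpha}$ relevant vertices and Borel--Cantelli over $n$ yield the almost sure inner inclusion, and optimising the depth $a$ against these two competing scales produces the exponent $\tfrac12$ and the logarithmic power $(1+\kappa)/(2\alpha)$.

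For the outer bound I would run the scheme of \cite{idlaSG}, now fed by the sharpened inner estimate. The idea is to control, shell by shell, the number of particles that reach radius $n+j$: a vertex beyond $B_\circ(n+a')$ can be occupied only by a particle that makes a long outward excursion without being absorbed, and the improved inner bound guarantees that $B_\circ\big(n-cn^{1/2}\ln(n)^{(1+\kappa)/(2\alpha)}\big)$ is already filled, so almost all absorption happens in a thin annulus near $\partial B_\circ(n)$ and the expected number of escaping particles contracts as $j$ grows. Converting a bound on the number of escaping particles into a bound on the radial overshoot requires dividing by the surface growth of $SG$, and it is precisely this volume-to-radius conversion through the exponent $\alpha$ that turns the inner scale $n^{1/2}$ into the outer scale $n^{1/2+1/(2\alpha)}$ and modifies the logarithmic power to $(1-\tfrac1\alpha)(1+\kappa)/(2\alpha)$; the remaining steps are the shell estimates and a final union bound and Borel--Cantelli exactly as in \cite{idlaSG}.

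The hard part will be the second paragraph: extracting from the exact odometer of \cite{HSH17} a lower bound on $\Delta_z$ that is sharp enough, and simultaneously controlling the continuation term $R_z$ despite the dependence of its starting points on the random cluster boundary. Because the whole argument must survive a union bound over the $\asymp n^{\alpha}$ boundary vertices, the concentration inequalities have to be pushed to their correct order in the subdiffusive Green-function geometry of $SG$; it is the quality of the odometer asymptotics, rather than any single probabilistic estimate, that ultimately determines the exponents in the statement.
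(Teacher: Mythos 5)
Your overall strategy is the paper's: for the inner bound, Lawler's decomposition into the number $M$ of walks visiting $z$ before exiting $B_\circ(n)$ versus the number $L$ visiting $z$ after settling, identification of the gap $\E(M)-\E(L)$ with the divisible sandpile odometer, the exact odometer computation of \cite{HSH17} as the new quantitative input, and the concentration lemma of \cite{lawler1992}; for the outer bound, the iterated-stopping scheme of \cite{idlaSG} fed by the inner bound, which indeed converts $D_I$ into $D_I^{1-1/\alpha}n^{1/\alpha}$ via the annulus volume estimate and Lemma \ref{outBoundHilfLemma}. However, the pivotal step of your second paragraph contains a genuine error. The continuation walks (the portions after the settling times $\sigma^i$) do \emph{not} start ``on the cluster boundary, localised near $\partial B_\circ(n)$'': each of the $b_n$ particles settles at a distinct site and the settlement sites exhaust the cluster, so the post-$\sigma^i$ walks start from essentially \emph{every} vertex of $B_\circ(n)$, one per vertex. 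This is precisely what makes the argument work: since each $y$ is the settlement site of at most one particle, $L$ is dominated by a sum $\tilde L$ of independent indicators, one walk started from each $y\in B_\circ(n)$, with $\E(\tilde L)=\sum_{y\in B_\circ(n)}g_n(y,z)/g_n(z,z)$, and the identity $b_n\,g_n(\circ,z)-\sum_{y\in B_\circ(n)}g_n(y,z)=u(z)$ is \emph{exact} (both sides solve the same Dirichlet problem on $B_\circ(n)$ with zero boundary data), not a leading-order coincidence. If the continuation walks were really concentrated near $\partial B_\circ(n)$, the gap $\Delta_z$ would be of order $\mu_z$ itself and the optimization would return the wrong exponents. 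Relatedly, killing the walks on exiting $B_\circ(2n)$ rather than on $\partial_I B_\circ(n)$ destroys the exactness of this Dirichlet-problem identification and forces an additional comparison you do not supply.

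Beyond that misstep, the two estimates that actually produce the exponents are asserted rather than derived: you need a lower bound of the form $u(z)\ge c\,d(z,\partial_I B_\circ(n))^{\beta}$ (the paper's Lemma \ref{lemmaOdometerBound}, obtained from Theorem \ref{odometerTheo} by a triangle-covering and monotonicity argument, since \cite{HSH17} computes the odometer only for the special masses $3^{k+1}$), together with $g_n(z,z)\le c\,d(z,\partial_I B_\circ(n))^{\beta-\alpha}$ and $\E_z(\tau(n))\ge c\,d(z,\partial_I B_\circ(n))^{\beta}$ from Lemma \ref{hilfsLemma}, to conclude $\E(M)-\E(\tilde L)\ge c\,d(z,\partial_I B_\circ(n))^{\alpha}$ while $\E(M)\le c\,n^{\alpha}$; balancing these against the deviation scale $\E(M)^{1/2}\ln(n)^{(1+\kappa)/2}$ is what yields the depth $n^{1/2}\ln(n)^{(1+\kappa)/(2\alpha)}$. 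As written, your proposal correctly names all the ingredients but leaves the central identity and the odometer lower bound as gaps, and the boundary-localisation claim would, if used, lead the computation astray.
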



\section{Preliminaries}
Let $G=(V,E)$ be an infinite locally finite connected graph. For convenience we will write $x\in G$ instead of $x\in V$ and $x\sim y$ if $(x,y)\in E$. Furthermore, we write $d(x,y)$ for the usual graph distance between vertices $x,y\in G$ and for $x\in G$ and $A\subseteq G$ denote
\begin{align*}
	\deg(x) &:= \{z \in G \ | \ x\sim z \} \text{ the vertex degree of $x$, } \\
	B_x(n) &:= \{z\in G\ |\ d(x,z) \leq n \} \text{ the ball of radius $n$ with center $x$, }\\
	\partial_I A &:= \{y\in A\ |\ \exists_{z\notin A}\ y\sim z \} \text{ the inner boundary of $A$}.
\end{align*}

Let $\Prob_x$ be the probability law of the simple random walk $X= (X(t))_{t\geq 0}$ on $G$ started in vertex $x\in G$ and $\E_x$ the expectation with respect to $\Prob_x$. For this simple random walk we define the stopping times
\begin{align*}
	&\tau_x(n) := \inf\{ t\geq 0 \ |\ X(t) \in \partial_I B_x(n) \cup B_x^c(n)\}, &\tau(n) := \tau_\circ(n)
\end{align*}
and the stopped Green function
\begin{align*}
	g_n(x,y) := \E_x\bigg(\sum_{n=0}^{\tau(n)-1} \mathds{1}_{\{X(t)=y\}} \bigg)
\end{align*}
which plays a major role in the analysis of bounds of IDLA. For any function $f:G\rightarrow\R$ we define the (probabilistic) graph Laplacian as 
\begin{align*}
	\Delta f (x) := \frac{1}{\deg(x)}\sum_{y\sim x} h(y)-h(x)
\end{align*}
and call any $h:G\rightarrow \R$ satisfying $\forall_{x\in S}\ \Delta h(x) = 0$ a harmonic function on $S\subseteq G$.
It is easy to show that the prementioned stopped Green function satisfies $\Delta g_n(\circ,x) = \delta_\circ(x)$ for all $x\in B_\circ(n)\backslash\partial_I B_\circ(n)$ and therefore is harmonic on $B_\circ(n)\backslash(\{\circ\}\cup\partial_I B_\circ(n))$.

Now, due to its special structure, one can derive three major properties of $SG$, all of which can be proven using coverings of proper triangles.
\newpage
\begin{lemma}
	The $SG$ satisfies the following properties.
	\begin{enumerate}
		\item[(EHI)] The elliptic Harnack inequality: there exists a positive constant $C$ such that for all $x\in G$, $n>0$ and functions $h\geq 0$ being harmonic on $B_x(2n)$ it holds
		\begin{align*}
			\sup_{y\in B_x(n)} h(y) \leq C \inf_{y\in B_x(n)} h(y).
		\end{align*} 
		\item[$(V_\alpha)$] The uniform volume growth condition: there exist constants $c,C>0$ such that for all $x\in G$, $n>0$ it holds
		\begin{align*}
			c n^\alpha \leq |B_x(n)| \leq C n^\alpha.
		\end{align*}
		\item[$(E_\beta)$] The uniform exit time growth condition: there exist constants $c,C>0$ such that for all $x\in G$, $n>0$ it holds
		\begin{align*}
			c n^\beta \leq \E_x(\tau_x(n)) \leq C n^\beta.
		\end{align*}
	\end{enumerate}
	The respective constants in the exponents are  $\alpha := {\log(3)}/{\log(2)}$ the Hausdorff dimension and $\beta:={\log(5)}/{\log(2)}$ the walk dimension of the Sierpinski gasket.
\end{lemma}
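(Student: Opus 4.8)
The plan is to exploit the finitely ramified self-similar cell structure of $SG$ throughout, since all three statements reduce to controlling a single \emph{proper triangle} (a translated copy of the level-$k$ gasket $(V_k,E_k)$) and then iterating across scales. The preparatory step is a two-sided geometric comparison: for a radius $r$, fix the level $k$ with $2^{k-1}\le r<2^k$, and verify that $B_x(r)$ contains a proper triangle of level roughly $k-C$ while being covered by boundedly many proper triangles of level $k+C$. This uses only that the graph diameter of a level-$k$ triangle equals $2^k$ and that distinct triangles meet solely at shared corner vertices; translation and self-similarity then make all subsequent estimates uniform in the center $x$.

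Granting this, $(V_\alpha)$ is essentially combinatorial. The gluing recursion $V_{n+1}=V_n\cup\bigl((2^n,0)+V_n\bigr)\cup\bigl((2^{n-1},2^{n-1}\sqrt3)+V_n\bigr)$ identifies three shared corners, giving $|V_{n+1}|=3|V_n|-3$ and hence $|V_n|=(3^{n+1}+3)/2\asymp 3^n$. Since $3^k=(2^k)^{\log 3/\log 2}$ and $2^k\asymp r$, sandwiching $B_x(r)$ between proper triangles as above yields $c\,r^\alpha\le|B_x(r)|\le C\,r^\alpha$ with $\alpha=\log 3/\log 2$.

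For $(E_\beta)$ I would use the classical first-passage / network reduction. Finite ramification forces a walk inside a level-$k$ triangle to leave only through its three corners, so the walk observed at the corners of the level-$(k-1)$ sub-triangles is again a walk on a smaller gasket. First-step analysis for the mean number of corner-to-corner traversals, together with the mean time spent per traversal inside one cell, produces a closed linear system whose solution multiplies the expected exit time by exactly $5$ when passing from level $k-1$ to level $k$. Iterating gives $\E_x(\tau_x(2^k))\asymp 5^k=(2^k)^{\log 5/\log 2}$, i.e.\ $\asymp r^\beta$ with $\beta=\log 5/\log 2$. Two points need care: the recursion is cleanest for a walk started at a corner, so I would combine the covering from the first step with a crude bound on the time to reach a nearby corner to pass to arbitrary starting vertices, and interpolate between dyadic radii.

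The hard part will be (EHI), the only genuinely quantitative statement about harmonic functions rather than a counting or first-moment estimate. Here I would again invoke finite ramification: a function harmonic on $B_x(2n)$ is, on each interior proper triangle, the harmonic extension of its three corner values, and the harmonic-extension map on a single cell is an explicit averaging operator that contracts oscillation by a fixed factor. The strategy is to prove a Harnack inequality first for one triangle---a nonnegative harmonic function there has its maximum and minimum over the cell comparable up to a universal constant---and then to chain these cell-wise estimates along a path of overlapping triangles joining any two points of $B_x(n)$, the number of links being bounded independently of scale by volume doubling $(V_\alpha)$. The delicate issue is making the chaining uniform in both $x$ and $n$ and confirming that the base-case constant is genuinely scale-invariant, which is exactly where self-similarity enters; alternatively one may invoke that $(V_\alpha)$ together with $(E_\beta)$ and the natural Poincar\'e inequality on cells imply the elliptic (indeed parabolic) Harnack inequality via the Grigor'yan--Telcs characterization.
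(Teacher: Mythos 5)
The paper does not actually prove this lemma: it observes that all three properties ``can be proven using coverings of proper triangles'' and then cites Barlow's notes (Chapter~2.2 for $(V_\alpha)$ and $(E_\beta)$, Theorem~2.6 for (EHI)). Your proposal is therefore doing strictly more work than the paper, and for $(V_\alpha)$ and $(E_\beta)$ it is exactly the argument the cited reference carries out: the count $|V_{n}|=(3^{n+1}+3)/2$ together with the sandwich of $B_x(r)$ between proper triangles of comparable dyadic scale gives $(V_\alpha)$, and the factor-$5$ time renormalization across levels (legitimate on the doubled gasket, where every vertex has degree $4$, so the decimated walk is again simple random walk) gives $(E_\beta)$ after the interpolation and change-of-starting-point steps you correctly flag. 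These two parts are sound.

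The gap is in your base case for (EHI). The claim that a nonnegative function harmonic on a single level-$k$ cell has its maximum and minimum \emph{over the whole cell} comparable with a universal constant is false uniformly in $k$: writing $h(v)=\sum_i \omega_v(p_i)h(p_i)$ with $\omega_v$ the harmonic measure on the three corners $p_1,p_2,p_3$, a vertex $v$ adjacent to $p_1$ has $\omega_v(p_2)+\omega_v(p_3)$ decaying geometrically in $k$ (the walk is strongly recurrent, so from next to $p_1$ it hits $p_1$ first with probability $1-O((3/5)^k)$); taking $h(p_1)=0$, $h(p_2)=1$ makes $\inf_T h/\sup_T h$ tend to $0$. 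The standard fix is to prove the cell Harnack inequality only on a concentric sub-cell (or for functions harmonic on an enlargement of the cell) and then chain with boundedly many cells of scale comparable to $n$, all well inside $B_x(2n)$; you gesture at this but the statement you would chain is the uncorrected one. Your fallback via ``$(V_\alpha)+(E_\beta)+$Poincar\'e $\Rightarrow$ EHI by Grigor'yan--Telcs'' is also not quite right as cited: in the anomalous regime $\beta>2$ that characterization requires a cutoff-Sobolev-type condition in addition to the Poincar\'e inequality. Since $SG$ is strongly recurrent ($\beta>\alpha$), the clean black-box route is the resistance formulation (Barlow--Coulhon--Kumagai, or Telcs): $(V_\alpha)$ plus two-sided effective-resistance growth $R(x,n)\asymp n^{\beta-\alpha}$ yields (EHI) and the sub-Gaussian heat kernel bounds, and the resistance estimate on $SG$ follows from the same $5/3$ renormalization you already use for exit times.
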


\begin{proof}
	See \cite[Chapter 2.2]{HeatKernelFractal} for the proofs of $(V_\alpha)$ and $(E_\beta)$. The proof of $(EHI)$ is then given in \cite[Theorem 2.6]{HeatKernelFractal}
\end{proof}

For our proof of the inner bound in Theorem \ref{mainThm} we need some slightly sharper bounds than the ones used in \cite[Lemma 2.8 \& 2.10]{idlaSG} whose proofs actually already give the desired result:

\begin{lemma}\label{hilfsLemma}
	There exist constants $c_1, c_2>0$ such that for every $x\in SG$ and $n>0$ we have
	\begin{align*}
		\E_x(\tau_\circ(n)) &\geq c_1\ d(x,\partial_I B_\circ(n))^\beta \text{ and } \\
		g_n(x,x) &\leq c_2\ d(x,\partial_I B_\circ(n))^{\beta-\alpha}.
	\end{align*}
\end{lemma}

In the remainder of this paper constants may change from line to line.

\section{The divisible sandpile}
In this section we will establish a lower bound in Lemma \ref{lemmaOdometerBound} needed for our arguments in Section \ref{sectionInnerBound}. We derive this from the results in \cite{HSH17}, which are briefly outlined here.

We call a function $\mu:SG\rightarrow[0,\infty)$ with finite support $|\text{supp}(\mu)|<\infty$ a sand distribution and we call any $x\in SG$ unstable if $\mu(x)>1$. Any unstable vertex can be toppled, such that the excess mass $\mu(x)-1$ is split evenly among its neighboring vertices: The resulting distribution is then given by
\begin{align*}
	T_x \mu := \mu + \max\{\mu(x)-1,0\} \Delta \delta_x,
\end{align*}
where $\delta_x(y)$ equals $1$ if $x=y$ and $0$ otherwise. We call $T_x$ the toppling operator at vertex $x$. Note that, there is no need for $x$ to be an unstable vertex since otherwise $T_x = \mathrm{Id}$. We start with an initial sand distribution $\mu_0$ and let $(x_k)_{k\geq 1}$ be a sequence of vertices containing each vertex of $SG$ infinitely often. We call such sequences toppling sequences and define the sand distribution after $k$ topples as
\begin{align*}
	\mu_k(y) := T_{x_k} \mu_{k-1}(y) = T_{x_k} \ldots T_{x_1} \mu_0(y)
\end{align*}
as well as the odometer function $u_k$, which counts the mass emitted by a vertex up to $k$ topples 
\begin{align*}
	u_k(y) := \sum_{j\in \{i\leq k | x_i = y\} } \mu_j(y)-\mu_{j+1}(y) = \sum_{j\in \{i\leq k | x_i = y\} } \max\{\mu_j(y)-1,0\}.
\end{align*}
Intuitively, toppling many vertices should spread the mass out such that the mass is covered by more and more vertices until there is not enough mass left to cover any new vertices. This intuition turns out to be the case as the next lemma states.
 
\begin{lemma}[{\cite[Lemma 3.1]{SandpileZ}}]\label{sandpileDirichletLemma}
As $k\rightarrow\infty$, $\mu_k$ converges to a sandpile distribution $\mu$ and $u_k$ converges from below to a limit function $u$. Moreover, these limits satisfy
	\begin{align*}
		\mu(z) = \mu_0(z) + \Delta u(z) \text{ and } \mu(z) \leq 1, \text{ for any } z\in SG.
	\end{align*}
\end{lemma}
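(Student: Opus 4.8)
The plan is to treat this as an instance of the \emph{abelian}/least--action structure of the divisible sandpile, which on any locally finite regular graph reduces the statement to three ingredients: an exact linear identity between $\mu_k$ and $u_k$, monotonicity of the odometer, and finiteness of the limiting odometer. First I would record the identity $\mu_k = \mu_0 + \Delta u_k$. Each individual topple satisfies $T_{x}\nu = \nu + m\,\Delta\delta_{x}$ with emitted mass $m = \max\{\nu(x)-1,0\}\ge 0$, so summing the updates over the first $k$ steps and using linearity of $\Delta$ gives $\mu_k - \mu_0 = \Delta\big(\sum_{j\le k} m_j\,\delta_{x_j}\big) = \Delta u_k$. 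Because every $m_j$ is non-negative, $u_k(y)$ is non-decreasing in $k$ for each fixed $y$; hence $u_k\uparrow u$ pointwise with $u(y)\in[0,\infty]$, and, $\Delta$ being a local operator, $\mu_k(y)=\mu_0(y)+\Delta u_k(y)\to\mu_0(y)+\Delta u(y)=:\mu(y)$ wherever $u$ is finite. The phrase ``converges from below'' is precisely this monotonicity. Note also $\sum_x\Delta\delta_y(x)=0$ on the $4$--regular graph $SG$, so the total mass $M:=\sum_x\mu_0(x)$ is conserved: $\sum_x\mu_k(x)=M$ for all $k$.

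The main obstacle is to show $u(y)<\infty$ for every $y$; this is the point at which an infinite graph could a priori fail, as mass might be pushed out toward infinity forever. I would handle it by a least--action comparison: if $w\ge0$ is any \emph{stabilizing} function, meaning $\mu_0+\Delta w\le 1$ everywhere, then $u_k\le w$ pointwise for all $k$. This is proved by induction on $k$: at a topple of $x$ with $m>0$ one has $\mu_k(x)>1\ge\mu_0(x)+\Delta w(x)$, so setting $f:=w-u_k\ge0$ (induction hypothesis) gives $-\Delta f(x)=\Delta u_k(x)-\Delta w(x)\ge m$; since $-\Delta f(x)=f(x)-\frac{1}{\deg x}\sum_{y\sim x}f(y)\le f(x)$ it follows that $m\le f(x)=w(x)-u_k(x)$, i.e.\ $u_{k+1}(x)\le w(x)$, while $u_{k+1}=u_k\le w$ off $x$. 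It then suffices to exhibit one \emph{finite} stabilizing $w$. Using the volume growth $(V_\alpha)$ there is a finite set $D\supseteq\mathrm{supp}(\mu_0)$ with $|D|>M$, and spreading the mass $M$ to density $<1$ inside $D$ amounts to solving a finite discrete Dirichlet (obstacle) problem $\Delta w=\nu-\mu_0$ with $0\le\nu\le1$, $\mathrm{supp}(\nu)\subseteq D$, $\sum\nu=M$, which admits a finite non-negative solution supported in $D$. Hence $u\le w<\infty$ everywhere, and $\mu=\mu_0+\Delta u$ is well defined.

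Finally I would verify stability $\mu(z)\le1$. Fix $z$; the toppling sequence contains $z$ infinitely often, and the total mass it emits, $u(z)=\sum_{j:\,x_j=z}\max\{\mu_{j}(z)-1,0\}$, is finite, so the non-negative summands must tend to $0$ along the visits of $z$, whence $\liminf$ of $\mu_k(z)$ over those visits is at most $1$; combined with $\mu_k(z)\to\mu(z)$ this forces $\mu(z)\le1$. Thus $\mu$ is a genuine (stable) sandpile distribution, $u_k\uparrow u$ with $u$ finite, and $\mu=\mu_0+\Delta u\le1$, as claimed. The only genuinely delicate step is the finiteness of $u$; everything else is bookkeeping with the linear identity and monotonicity.
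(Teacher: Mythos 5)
The paper itself gives no proof of this lemma --- it is imported verbatim from Levine--Peres \cite{SandpileZ} --- so I am measuring your argument against the standard one in that reference. Your linear identity $\mu_k=\mu_0+\Delta u_k$, the monotonicity of $u_k$, the least-action induction showing $u_k\le w$ for any non-negative stabilizing $w$, and the closing observation that the emitted masses at $z$ must tend to zero along its infinitely many visits (forcing $\mu(z)\le 1$) are all correct. The genuine gap sits exactly where you flag the difficulty: the existence of a \emph{finite} non-negative stabilizing function. The object you invoke --- a non-negative $w$ supported in a finite set $D$ solving $\Delta w=\nu-\mu_0$ everywhere --- does not exist in general: for $x\notin D$ adjacent to $D$ the equation forces $\tfrac{1}{\deg x}\sum_{y\sim x}w(y)=0$, i.e.\ $w$ must vanish on $\partial_I D$ as well, and the natural Green-function candidates $w=\sum_y g_B(\cdot,y)(\mu_0(y)-\nu(y))$ either take negative values or violate $\Delta w\le 1-\mu_0$ on the boundary of the domain where they cease to solve the equation. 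More fundamentally, the minimal non-negative $w$ with $\mu_0+\Delta w\le 1$ \emph{is} essentially the odometer, so asserting that a finite one exists is close to assuming the conclusion.

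The missing ingredient is elementary. A toppling at $x$ replaces $\mu(x)$ by $\min\{\mu(x),1\}$ and only ever adds mass at other sites, so $\min\{\mu_k(x),1\}$ is nondecreasing in $k$; by conservation of the total mass $M$, the set of sites that ever reach mass $\ge 1$ --- hence the set of sites that ever emit mass --- contains at most $M$ points, and since each such point outside $\mathrm{supp}(\mu_0)$ must have received mass from a toppling neighbour, this set lies in a fixed finite ball $B$. Two ways to finish: (i) summation by parts against the stopped Green function of a slightly larger ball gives $u_k(x)\le C\sum_y g_B(x,y)\mu_0(y)<\infty$ uniformly in $k$, since $u_k$ is supported well inside $B$ and $\mu_k\ge 0$; or (ii) observe that your least-action induction only needs $\mu_0(x)+\Delta w(x)\le 1$ at sites that actually topple with positive emission, so $w=C\sum_y g_B(\cdot,y)\mu_0(y)$, which is non-negative, finite, and satisfies $\mu_0+\Delta w=0$ in the interior of $B$, is an admissible comparison function even though it misbehaves on $\partial_I B$. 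With either repair your proof closes and agrees in substance with the cited one.
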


At first sight the limits $u$ and $\mu$ seem to depend on the choice of the toppling sequence selected for the toppling procedure. This however, turns out not to be the case as the Abelian property of the divisible sandpile model states.

\begin{lemma}[{\cite[Lemma 3.6]{HSH17}}]
	The limiting odometer function $u$ is independent of the choice of the toppling sequence.
\end{lemma}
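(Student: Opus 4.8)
The plan is to prove the stronger statement that the limiting odometer $u$ admits a description making no reference to the toppling sequence at all, namely as the pointwise smallest nonnegative function that stabilizes $\mu_0$. Call a function $v : SG \to [0,\infty)$ \emph{stabilizing} if $\mu_0 + \Delta v \leq 1$ everywhere. First I would record the identity $\mu_k = \mu_0 + \Delta u_k$, which holds for every finite $k$: it is true for $k=0$, and it is preserved under a single toppling, since toppling at $x_k$ adds $\max\{\mu_{k-1}(x_k)-1,0\}\,\Delta\delta_{x_k}$ both to $\mu_{k-1}$ and to $\Delta u_{k-1}$. Passing to the limit and invoking the previous lemma ($\mu = \mu_0 + \Delta u$ with $\mu \leq 1$) shows that $u$ is itself stabilizing.

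The heart of the argument is a \emph{least action principle}: for every stabilizing $v$ and every $k$ one has $u_k \leq v$ pointwise. I would prove this by induction on $k$. The base case is $u_0 = 0 \leq v$, using $v \geq 0$. For the inductive step, suppose $u_{k-1} \leq v$ and write $x := x_k$. The odometer changes only at $x$, increasing by $\max\{\mu_{k-1}(x)-1,0\}$, so it suffices to treat the case $\mu_{k-1}(x) > 1$ and to show $u_{k-1}(x) + \mu_{k-1}(x) - 1 \leq v(x)$. Here I would combine the step identity $\mu_{k-1}(x) = \mu_0(x) + \Delta u_{k-1}(x)$ with the stabilizing inequality $\mu_0(x) + \Delta v(x) \leq 1$ to obtain $\mu_{k-1}(x) - 1 \leq \Delta(u_{k-1}-v)(x)$.

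The decisive point, and the step I expect to require the most care, is bounding this Laplacian. Since $u_{k-1} - v \leq 0$ everywhere by the inductive hypothesis, the averaging term $\tfrac{1}{\deg(x)}\sum_{y\sim x}(u_{k-1}(y)-v(y))$ is nonpositive, whence $\Delta(u_{k-1}-v)(x) \leq -(u_{k-1}(x)-v(x)) = v(x) - u_{k-1}(x)$. Substituting into the previous inequality gives exactly $u_k(x) = u_{k-1}(x) + \mu_{k-1}(x) - 1 \leq v(x)$, which closes the induction. Letting $k \to \infty$ and using that $u_k$ increases to $u$ yields $u \leq v$ for every stabilizing $v$.

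Finally, independence follows immediately. If $\tilde{u}$ is the limiting odometer produced by any other toppling sequence, then $\tilde{u}$ is also stabilizing by the first paragraph, so the least action principle gives $u \leq \tilde{u}$; reversing the roles of the two sequences gives $\tilde{u} \leq u$, and hence $u = \tilde{u}$.
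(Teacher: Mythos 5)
Your argument is correct: the identity $\mu_k=\mu_0+\Delta u_k$, the least action principle proved by induction using $u_{k-1}-v\le 0$ to bound $\Delta(u_{k-1}-v)(x)\le v(x)-u_{k-1}(x)$, and the two-sided comparison of limiting odometers all go through (note $SG$ is $4$-regular, so the toppling operator as defined conserves mass and no normalization issues arise). The paper gives no proof of this lemma, citing \cite{HSH17} instead; your least-action-principle argument is essentially the standard one from that reference and from Levine--Peres, so there is nothing to flag.
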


Therefore we call $\mu$ the sand distribution and $u$ the odometer function according to the starting distribution $\mu_0$ and its sandpile cluster is defined as $\mathcal{S}:=\{ z \in SG \ | \ \mu(z) = 1 \} $. In our case we are particularly interested in the limiting functions according to the starting distribution $\mu_0 = |B_\circ(n)| \delta_\circ$ since the odometer function then satisfies $\Delta u = 1- |B_\circ(n)| \delta_\circ$ on the sandpile cluster. This will help us in the analysis of the stopped Green function $g_n$.
The next lemma gives the solution to this problem and is a direct consequence of a result from Huss and Sava-Huss in \cite[Theorem 4.2]{HSH17}.
\begin{lemma}\label{sandpileClusterLemma}
	For any $n\geq 1$ the sandpile distribution and therefore the sandpile cluster according to the starting distribution $\mu_0 = |B_\circ(n)| \delta_\circ$ on $SG$ are given by
	\begin{align*}
		&\mu(z) = \mathds{1}_{B_\circ(n)}(z) = \begin{cases}
			1 \text{ for } z \in B_\circ(n) \\
			0 \text{ otherwise}
		\end{cases} \text{ and } 
		&\mathcal{S} = B_\circ(n).
	\end{align*}
\end{lemma}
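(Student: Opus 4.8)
The plan is to compute the odometer function explicitly using the self-similar structure of $SG$ and the toppling symmetry, leveraging the result from \cite[Theorem 4.2]{HSH17} which the lemma cites directly. The key observation is that the starting distribution $\mu_0 = b_n\,\delta_\circ$ places exactly $b_n = |B_\circ(n)|$ units of mass at the origin, which is precisely the right amount of mass to fill $B_\circ(n)$ with unit density and no more. This is not a coincidence: the divisible sandpile redistributes mass so that the stabilized configuration has $\mu(z) \le 1$ everywhere, and total mass is conserved under toppling. Since $\sum_z \mu_0(z) = b_n$ and the stabilized density is at most $1$, the cluster $\mathcal{S}$ must contain at least $b_n$ sites; the claim is that it contains exactly these, namely $B_\circ(n)$.

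\textbf{The main steps} I would carry out are as follows. First, invoke the Abelian property (the preceding lemma) so that the limiting odometer $u$ and distribution $\mu$ are independent of the toppling order; this lets me exploit the symmetry of $SG$ freely. Second, I would verify that the candidate solution $\mu = \mathds{1}_{B_\circ(n)}$ is consistent with mass conservation: indeed $\sum_z \mathds{1}_{B_\circ(n)}(z) = b_n = \sum_z \mu_0(z)$, so the total mass matches exactly. Third, I would construct an explicit odometer $u$ supported on $B_\circ(n)$ satisfying $\Delta u = \mu - \mu_0 = \mathds{1}_{B_\circ(n)} - b_n\,\delta_\circ$ on all of $SG$, with $u \ge 0$ and $u = 0$ outside $B_\circ(n)$. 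The existence of such a $u$, together with the complementarity conditions characterizing the stabilized sandpile (that $\mu \le 1$ everywhere, $\mu = 1$ on $\{u > 0\}$, and $u \ge 0$), uniquely identifies the pair $(u,\mu)$; this is exactly the content of \cite[Theorem 4.2]{HSH17} applied to the point-mass start distribution.

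\textbf{The hard part} is the third step: producing the explicit odometer $u$ and confirming it is nonnegative and vanishes outside $B_\circ(n)$. This is where the cited computation from \cite{HSH17} does the real work, since it requires solving the discrete Poisson equation $\Delta u = \mathds{1}_{B_\circ(n)} - b_n\,\delta_\circ$ on the gasket graph and checking the obstacle-problem complementarity conditions. The self-similar recursion defining $V_n$ and $E_n$ makes this tractable: one exploits the fact that $B_\circ(n)$ is itself a union of scaled copies of the base triangle, and the harmonic structure respects the recursive decomposition. Rather than reconstruct this calculation, I would cite \cite[Theorem 4.2]{HSH17} for the explicit form of $u$ and merely verify that, for the specific mass $b_n$, the least-action / obstacle-problem solution has support exactly $B_\circ(n)$ and density identically $1$ there. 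Once the odometer is pinned down, the identity $\mathcal{S} = \{z : \mu(z) = 1\} = B_\circ(n)$ follows immediately from the definition of the sandpile cluster.
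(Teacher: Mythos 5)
Your proposal is correct and matches the paper's approach: the paper gives no independent proof of this lemma, simply stating it as a direct consequence of \cite[Theorem 4.2]{HSH17}, which is exactly the citation your argument ultimately rests on. Your additional scaffolding (Abelian property, mass conservation, and the obstacle-problem complementarity characterization of the stabilized pair $(u,\mu)$) is a sound and standard way to frame why that theorem yields $\mu=\mathds{1}_{B_\circ(n)}$ and $\mathcal{S}=B_\circ(n)$.
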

Note that, from this we also know that the odometer function on $z\in \partial_I B_\circ(n)$ equals $u(z)=0$ since otherwise there would be mass outside of the cluster.
For our analysis we will need a lower bound for the odometer function depending on the distance to the boundary $\partial_I B_\circ(n)$, which we will derive from the calculations in \cite[Section 5]{HSH17}.

Let $\tilde{u}:SG^+\rightarrow\R$ be the function on $SG^+=SG\cap(\R_{\geq0} \times \R) = (V_\infty,E_\infty)$ defined by 
\begin{align*}
	&\tilde{u}(x,0) = 0 \text{ for all } x\geq 0,\\
	&\tilde{u}(x,\sqrt{3}/2) = 2, \text{ for all } x\geq 0,\text{ s.t. } (x,\sqrt{3}/2)\in SG^+ 
\end{align*}
and $\Delta \tilde{u}(x,y) = 1$ for all $(x,y)\in SG^+$.

\begin{lemma}[{\cite[Lemma 5.11]{HSH17}}]
	Let $k\in\N$, then for $z=(2^{k-1},2^{k-1} \sqrt{3} )$ the upper boundary point of $V_n$, which is a triangle of size $2^k$, it holds $\tilde{u}(z) = 2\cdot 5^k$. 
\end{lemma}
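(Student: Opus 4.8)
The plan is to prove the claim by induction on $k$, exploiting the exact self-similarity of $SG^+$: the triangle $V_k$ decomposes into three congruent copies of $V_{k-1}$ (bottom-left, bottom-right and top), glued at the three edge midpoints $m_L,m_R,m_B$ of $V_k$, and on each copy the defining relation $\Delta\tilde u=1$ is preserved under the self-similar identification. Since both bottom corners $(0,0)$ and $(2^k,0)$ of $V_k$ lie on the axis, the condition $\tilde u(x,0)=0$ pins their values to $0$, so the only corner value of $V_k$ left to determine is the apex $a_k:=\tilde u(z_k)$. Crucially, the apex $z_k$ of $V_k$ is exactly the apex of its top copy, whose two base corners are $m_L$ and $m_R$.

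First I would record, for a single copy of $V_{k-1}$ carrying $\Delta\tilde u=1$, the inhomogeneous harmonic-extension rule: by superposition $\tilde u=h+p$ with $h$ harmonic matching the corner values and $p$ the particular solution with zero corners and $\Delta p=1$. Hence the midpoint values of a copy are an affine function of its three corner values, given by the Sierpinski $\tfrac15(2,2,1)$ harmonic-extension weights plus a level-dependent source correction coming from $p$. Iterating this rule down to the scale where the boundary data are known (the axis row $\tilde u=0$ and the row $\tilde u(\cdot,\sqrt3/2)=2$) expresses all junction values in $V_k$ in terms of $a_k$ and the lower-level data; imposing $\Delta\tilde u=1$ at the junction vertices then closes a linear recursion for the relevant boundary values of $\tilde u$ on the nested triangles $V_{k-1}\subset V_k\subset\cdots$.

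The content of the recursion is the walk-dimension renormalization: under the spatial doubling $z_{k-1}\mapsto z_k=2z_{k-1}$, the Dirichlet-type solution of $\Delta\tilde u=1$ scales by the factor $5$ (reflecting $\beta=\log5/\log2$, heuristically that resistance scales by $5/3$ while the vertex count scales by $3$). Concretely I expect the recursion to reduce to $a_k=5\,a_{k-1}$, which together with the base value $a_0=\tilde u(1/2,\sqrt3/2)=2$ forced by the row condition yields $a_k=2\cdot5^k$. A clean way to extract the factor $5$ while controlling the constant source is the discrete divergence theorem: summing $\Delta\tilde u=1$ over the interior of the top copy turns the source into the net boundary flux, and self-similarity identifies this flux across levels, producing the multiplier $5$ without solving for every interior value.

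The main obstacle is that $a_k$ is a value of a function on the infinite graph $SG^+$, so the apex is influenced by the structure above $V_k$ and is not determined bottom-up from the two prescribed rows alone; the recursion therefore cannot track the single scalar $a_k$ in isolation. The remedy is to track a full vector of covariant boundary values (the apex together with the edge-midpoint values $\tilde u(m_L),\tilde u(m_R)$ and their analogues at each level) that closes under the self-similar extension map, using the structural description of $\tilde u$ from \cite[Section 5]{HSH17} to fix the external contribution. Verifying that this enlarged system indeed closes, and that its forced solution along the apex direction is exactly $2\cdot5^k$, is where the real work lies; the remainder is the routine bookkeeping of the $\tfrac15(2,2,1)$ rule and the source correction.
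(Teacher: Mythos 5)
First, note that the paper does not prove this lemma at all: it is imported verbatim from \cite[Lemma 5.11]{HSH17}, so there is no in-paper proof to compare your argument against. Judged on its own, your proposal identifies the right ingredients (self-similarity, the base value $\tilde u(1/2,\sqrt3/2)=2$ forced by the second-row condition, and a factor-$5$ renormalization reflecting $\beta=\log 5/\log 2$), but it is a plan rather than a proof, and the one step you defer is the entire content of the lemma: you write that you ``expect the recursion to reduce to $a_k=5a_{k-1}$'' and that verifying the closed system ``is where the real work lies'', yet nothing in the proposal actually derives the multiplier $5$. The clean route is to show that the restriction of $\tilde u$ to the coarse graph $2\cdot SG^+$ satisfies the renormalized equation with source $5$ and boundary rows $0$ and $10$, so that $\tilde u(2z)=5\,\tilde u(z)$ follows by uniqueness of the defining boundary-value problem; but that requires proving both the exact renormalization identity for the inhomogeneous equation and that uniqueness, neither of which appears in your sketch.

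Second, your diagnosis of the ``main obstacle'' is off, and your proposed remedy is circular. The apex value is in fact determined bottom-up from the two prescribed rows: the Laplacian conditions at the non-corner vertices of $V_k$ involve only vertices of $V_k$, and already for $k=1$ the single condition at $(1/2,\sqrt3/2)$, whose four neighbours are $(0,0),(1,0),(3/2,\sqrt3/2),(1,\sqrt3)$, forces $\tfrac14(0+0+2+\tilde u(1,\sqrt3))-2=1$, i.e.\ $\tilde u(1,\sqrt3)=10$, with no reference to the graph above $V_1$; the analogous finite linear system inside $V_2$ forces the value $50$. So there is no ``external contribution'' to fix. More seriously, fixing it by ``using the structural description of $\tilde u$ from \cite[Section 5]{HSH17}'' assumes the results of the very section whose lemma you are trying to prove. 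As it stands the argument has a genuine gap at its central step.
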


Let $\psi_k:\R^2 \rightarrow \R^2$ be given by $\psi_k(x,y) = (-\frac{1}{2} x - \frac{\sqrt{3}}{2} y+2^k,\frac{\sqrt{3}}{2} x- \frac{1}{2} y)$, which rotates $V_k$ by $120^\circ$ around its big center hole. With this and the function $\tilde{u}$ one can calculate the odometer function for specific starting masses.

\begin{thm}[{\cite[Theorem 5.12]{HSH17}}]\label{odometerTheo}
	Let $u^{(k)}:SG\rightarrow\R$ be the odometer function of the divisible sandpile with initial mass distribution $\mu_0 = 3^{k+1} \delta_\circ = (|B_\circ(2^k)|-2) \delta_\circ$. Then for all $k\geq 0$
	\begin{align*}
		u^{(k)}(x,y) = \begin{cases}
			\tilde{u} \circ \psi_k (|x|,y) \text{ if } (x,y)\in B_\circ(2^k), \\
			0 \text{ otherwise}.
		\end{cases}
	\end{align*}
\end{thm}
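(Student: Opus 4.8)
The strategy is to recognise $u^{(k)}$ as the solution of a discrete Poisson problem whose data are already known, and then to verify that the claimed expression solves it. Abbreviate by $v$ the function on the right-hand side, so $v(x,y)=\tilde{u}\circ\psi_k(|x|,y)$ for $(x,y)\in B_\circ(2^k)$ and $v\equiv 0$ elsewhere. By the abelian property the odometer is well defined, and by the preceding stabilisation lemma the total mass $3^{k+1}=|B_\circ(2^k)|-2$ saturates all but two vertices of $B_\circ(2^k)$, so that the final distribution is $\mu=\mathds{1}_{B_\circ(2^k)}$ off two designated boundary vertices and $\mathcal{S}=B_\circ(2^k)$. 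Consequently $u^{(k)}$ is the unique function with $u^{(k)}\geq 0$, $u^{(k)}\equiv 0$ on $SG\setminus B_\circ(2^k)$, and
\begin{align*}
\Delta u^{(k)}=\mu-\mu_0 \qquad\text{on }SG.
\end{align*}
It therefore suffices to check that $v$ has these three properties; uniqueness for a finitely supported source then gives $u^{(k)}=v$.

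First I would reduce the verification to a single fundamental domain by using the two symmetries encoded in the formula. The modulus $|x|$ reflects the invariance of $SG$ under reflection in the $y$-axis, so $v$ is automatically symmetric and it is enough to study it on $SG^+$, where $v=\tilde{u}\circ\psi_k$. The map $\psi_k$ is the rotation by $120^\circ$ that fixes the central hole of the triangle of size $2^k$ and cyclically permutes its three corners; in particular it is a graph automorphism of $V_k$. Hence at every vertex $z$ that lies in the interior of $B_\circ(2^k)$, off the reflection axis and away from the corners, the relation $\Delta\tilde{u}\equiv 1$ transports to $\Delta v(z)=(\Delta\tilde{u})(\psi_k(z))=1=\mu(z)-\mu_0(z)$. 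This leaves only the finitely many exceptional vertices where the degree-$4$ geometry of $SG$, the folding $|x|$, or the corner data $\tilde{u}(\cdot,0)=0$ and $\tilde{u}(\cdot,\sqrt{3}/2)=2$ interact with the equation: the source vertex, the vertices on the $y$-axis, and the three corners of $V_k$.

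The heart of the proof, and the reason Lemma 5.11 is singled out, is the self-similar bookkeeping of these boundary values. I would argue by induction on $k$ along the decomposition
\begin{align*}
V_{k+1}=V_k\cup\big((2^k,0)+V_k\big)\cup\big((2^{k-1},2^{k-1}\sqrt{3})+V_k\big),
\end{align*}
in which the three copies of $V_k$ meet only at the cutpoints surrounding the central hole. On a triangle a function with $\Delta(\cdot)=1$ is pinned by its three corner values, and joining two such copies at a single cutpoint matches one corner value while the mass balance across the cutpoint rescales by the walk factor $5$ per level; this is exactly what the evaluation $\tilde{u}(2^{k-1},2^{k-1}\sqrt{3})=2\cdot 5^k$ records, and it is what carries the boundary data of $\tilde{u}$ consistently from level $k$ to level $k+1$. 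Pushing the corner values and the cutpoint balance through $\psi_k$ then yields saturation $\Delta v=1$ at every interior cutpoint, the prescribed source balance $\Delta v=1-3^{k+1}$ at the source vertex, and $v=0$ along the part of $\partial_I B_\circ(2^k)$ opposite the source.

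The step I expect to be the main obstacle is precisely this local analysis at the source and at the cutpoints, where the discrete Laplacian superposes two or three rotated and reflected branches of $\tilde{u}$: one must check that the incident values $\tilde{u}(\psi_k(\cdot))$ combine to give exactly the source deficit $3^{k+1}$ and exactly the saturation value $1$ at each interior cutpoint, with no residual mass, which is equivalent to verifying that the two missing units of mass are absorbed at the two designated boundary corners so that $\mu\leq 1$ holds everywhere. Once this mass balance is established, uniqueness for the Poisson problem follows from the maximum principle on the finite set $B_\circ(2^k)$, and the identity $u^{(k)}=v$ follows.
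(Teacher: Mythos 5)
First, a point of orientation: the paper offers no proof of this statement at all --- it is imported verbatim as Theorem~5.12 of \cite{HSH17} --- so there is no internal argument to measure yours against; what can be assessed is whether your reconstruction would stand on its own.

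Your overall framing (treat the odometer as the solution of a discrete boundary-value problem and verify that $\tilde u\circ\psi_k$ solves it, then invoke uniqueness) is the right one, but as written the proposal has two genuine gaps. The first is a circularity in the setup: you feed into the Dirichlet problem the final distribution $\mu$, asserting that the mass $3^{k+1}=|B_\circ(2^k)|-2$ ``saturates all but two vertices'' by appeal to ``the preceding stabilisation lemma''. That lemma concerns the initial mass $|B_\circ(n)|\delta_\circ$, not $(|B_\circ(2^k)|-2)\delta_\circ$; for the present mass, which vertices end up unsaturated (and hence what $\mu-\mu_0$ is) is part of what the theorem determines, so it cannot be an input to the uniqueness argument. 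The non-circular route is the obstacle-problem (least-action) characterisation of the odometer: show that $v\ge 0$, that $\mu_0+\Delta v\le 1$ everywhere, and that $\mu_0+\Delta v=1$ wherever $v>0$; then $v$ is the odometer and $\mu$ is read off afterwards. The second gap is that the entire analytic content of the theorem --- computing $\Delta v$ at the source, at the cut points where the three copies of $V_{k-1}$ meet, along the $y$-axis where the two half-gaskets are folded together by $|x|$, and at the corners of $V_k$ where the degree in the triangle (two) differs from the degree in $SG$ (four), together with the factor-$5$ propagation of the corner value $2\cdot 5^k$ --- is explicitly labelled ``the main obstacle'' and then never carried out. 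A verification proof of an explicit formula lives entirely in these finitely many exceptional vertices; establishing $\Delta v=1$ at generic interior points by equivariance rules out almost nothing. As a concrete illustration of why this step cannot be waved at: taking the printed $\psi_k$ at face value, $\psi_k(\circ)=(2^k,0)$ and hence $v(\circ)=\tilde u(2^k,0)=0$, which contradicts the identity $u^{(k)}(\circ)=2\cdot 5^k$ recorded immediately after the theorem; the rotation must be taken in the opposite sense (equivalently, replace $\psi_k$ by $\psi_k^{-1}$) for the far edge of $B_\circ(2^k)$ to land on the zero set $\{y=0\}$ of $\tilde u$. Actually performing the check at the source vertex is precisely what would have surfaced this, which is why the deferred local analysis is not a technicality but the proof itself.
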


Notice that together with the previous lemma we get $u^{(k)}(\circ) = 2\cdot 5^k$.
Using this and the fact the odometer grows when adding the missing mass of $2$, we can derive a lower bound for the odometer function:

\begin{lemma}\label{lemmaOdometerBound}
	Let $n,\delta \in \N$ such that $\delta \leq n/2 $ and $u:SG\to \R$ the odometer function of the sandpile cluster according to the starting distribution $\mu_0 = |B_\circ(n)| \delta_0$. Then for all $z\in B_\circ(n-3\delta)$ it holds
	\begin{align*}
		u(z) \geq c\ \delta^{\beta}
	\end{align*}
	for some $c>0$.
\end{lemma}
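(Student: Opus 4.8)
=== PROOF PROPOSAL===

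\textbf{Plan.} The goal is a lower bound $u(z)\geq c\,\delta^\beta$ for the odometer of the sandpile started from $\mu_0=|B_\circ(n)|\delta_\circ$, valid on the inner ball $B_\circ(n-3\delta)$. The strategy is to compare this odometer with the explicitly computable odometer $u^{(k)}$ from Theorem \ref{odometerTheo}, exploiting the monotonicity (abelian) structure of the divisible sandpile: adding more mass at the origin only increases the odometer everywhere. First I would sandwich the true radius: pick the integer $k$ with $2^{k-1}\le n-3\delta \le \ldots \le 2^k$ so that a triangle $B_\circ(2^{k-1})$ of a size comparable to $\delta$ (or to $n$) sits appropriately inside the cluster. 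The key mechanism is that the value $u^{(k)}(\circ)=2\cdot 5^k$ and the explicit formula $u^{(k)}=\tilde u\circ\psi_k$ give a quantitatively large odometer on the scale-$2^k$ triangle, and $5^k=(2^k)^\beta$ since $\beta=\log 5/\log 2$.

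\textbf{Key steps.} I would proceed as follows. (1) Fix $z\in B_\circ(n-3\delta)$ and choose $k\in\N$ with $2^{k}\le \delta<2^{k+1}$ (so $2^k$ is comparable to $\delta$). The point $z$ lies within graph distance $\ge 2\delta \ge 2^{k+1}$ of the boundary $\partial_I B_\circ(n)$, so a translated/rotated copy of a scale-$2^k$ sub-triangle $T$ containing $z$ fits strictly inside $B_\circ(n)$. (2) Use the abelian property to compare: the odometer $u$ dominates the odometer of a sandpile that redistributes, inside $T$, only the mass that the true dynamics already deposits into $T$. Since $T\subset B_\circ(n)$ is saturated ($\mu\equiv 1$ on $B_\circ(n)$), and the boundary of $T$ receives odometer mass from the surrounding filled region, I can bound $u$ below by the odometer $u^{(k)}$ of the standalone configuration on $T$ started with its natural central mass $3^{k+1}\delta_{c_T}$. (3) By Theorem \ref{odometerTheo} and the remark $u^{(k)}(c_T)=2\cdot 5^k$, together with the elliptic Harnack-type regularity of $\tilde u$ (it solves $\Delta\tilde u=1$ with controlled boundary data), the value of $u^{(k)}$ at any interior point of $T$, in particular at $z$, is bounded below by a fixed fraction of $5^k$. (4) Conclude $u(z)\ge c\,5^k\ge c'\,\delta^\beta$, using $5^k=(2^k)^\beta\ge (\delta/2)^\beta$.

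\textbf{Main obstacle.} The delicate point is step (2): justifying that the true odometer $u$ on $B_\circ(n-3\delta)$ is at least the odometer of an \emph{isolated} scale-$2^k$ triangle started from central mass. The abelian property guarantees order-preservation when one initial distribution dominates another, but here the comparison configuration is not literally dominated by $\mu_0$; rather, I must argue that the mass flux across $\partial_I T$ in the global dynamics is nonnegative (mass flows outward into the saturated exterior), so restricting attention to $T$ and treating its boundary as a source can only \emph{decrease} the odometer relative to the global one. Making this monotonicity rigorous—likely via a least-action / obstacle-problem characterization of $u$ as the minimal nonnegative function with $\mu_0+\Delta u\le 1$, and a localization argument on $T$—is where the real work lies. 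The constant $c>0$ then absorbs both the comparison $2^k\asymp\delta$ and the regularity constant from $(EHI)$ controlling $\tilde u$ away from its peak. The hypothesis $\delta\le n/2$ ensures $B_\circ(n-3\delta)$ is nonempty and the chosen sub-triangle stays well inside the cluster.
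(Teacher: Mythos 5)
Your high-level idea---compare $u$ with the explicit odometer $u^{(k)}$ of Theorem \ref{odometerTheo} and use $5^k=(2^k)^\beta$---is the right one, but the way you localize has a fatal gap in steps (3)--(4), on top of the difficulty you already flag in step (2). The standalone odometer $u^{(k)}$ vanishes on $\partial_I B_\circ(2^k)$ (indeed $\tilde u$ is identically $0$ on the whole bottom row), so there is no uniform lower bound of order $5^k$ valid at every vertex of the comparison triangle $T$. The elliptic Harnack inequality cannot rescue this: $u^{(k)}$ is not harmonic ($\Delta u^{(k)}=1$ away from the source), and more importantly it is not bounded away from $0$ near $\partial T$, so no Harnack chain gives a lower bound at points close to $\partial T$. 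Nor can you always choose $T$ so that $z$ sits deep inside it: if $z$ is a corner of a level-$k$ cell of the gasket, it lies on the boundary of \emph{every} level-$k$ cell containing it, and for such $z$ your comparison value is $0$ rather than $c\,5^k$. Step (2) is also shakier than you suggest: even granting that at least $|T|$ units of mass enter a saturated $T$, that mass enters through the several corners of $T$ in uncontrolled proportions, and the least-action principle compares odometers of two initial distributions on the \emph{same} graph---it does not let you replace "mass entering $T$ through its boundary during the global evolution" by "mass placed at one prescribed vertex of a standalone copy of $T$". That replacement is only clean when all the mass entering $T$ is forced through a single cut vertex.

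The paper's proof turns your construction inside out precisely to dodge both problems. It places triangles $\triangle_i$ of scale $2^{m-1}\asymp\delta$ in the annulus $B_\circ(n)\setminus B_\circ(n-\delta)$, positioned so that each is attached to the component of $\circ$ only through its corner $z_i$ nearest to $\circ$; all mass filling $\triangle_i$ must then pass through $z_i$, so $u(z_i)$ dominates the odometer at the \emph{source vertex} of a standalone sandpile of mass $|\triangle_i|$, which by Theorem \ref{odometerTheo} equals $2\cdot 5^{m-1}$---the maximum of $u^{(m-1)}$, not a generic interior value. The bound is then carried from the sphere of radius roughly $n-\delta$ to all of $B_\circ(n-3\delta)$ by the radial monotonicity of $u$ (it is nonincreasing in $d(\circ,\cdot)$). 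That monotonicity step is entirely absent from your proposal and is the missing idea that makes the localization legitimate.
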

	
\begin{proof}
	Let $m\in\N$ such that $ 2^{m+1} > \delta \geq 2^m$. Then there are triangles $\triangle_i$ of size $2^{m-1}$ inside the annulus $B_\circ(n)\backslash B_\circ(n-\delta)$, whose removal leaves $\circ$ in a finite component of a union of larger triangles. Now let $z_i\in\triangle_i$ be the boundary point closest to $\circ$. For all such boundary points $z_i,z_j$ it holds $u(z_i)=u(z_j)$ due to symmetry.
	Now any lower bound for $u(z_i)$ also holds on $u(z)$ for all $z\in B_\circ(n-\delta)$, since the odometer is decreasing in distance to $\circ$. 
	Let $u_\triangle$ be the odometer when starting with just enough mass in $\circ$ to fill up all the triangles $\triangle_i$, then we obviously have $u_\triangle \leq u$ since after that we only add more mass into the system which has to be distributed to the outer boundary. Now $u_\triangle(z_i)$ equals the odometer at vertex $\circ$ of the sandpile cluster with starting mass $|\triangle_i|$ and from Theorem \ref{odometerTheo} we can deduce:
	\begin{align*}
		u(z_i) \geq u_\triangle(z_i) \geq u^{{(m-1)}}(\circ) = 2 \cdot 5^{m-1} \geq c\ \delta^\beta 
	\end{align*}
	for some $c>0$.
	
\end{proof}

\section{The inner bound for IDLA}\label{sectionInnerBound}
Similar to \cite{idlaSG} we use the standard approach for bounds of the IDLA cluster from \cite{lawlerSubdiffusive}, which heavily depends on the analysis of stopped Green functions.\newline 
We consider fixed $z\in B_\circ(n-c\ n^\frac{\alpha}{2}\ln(n)^\frac{1+\kappa}{2} )$ and for $i=1,\ldots,b_n$ let $(X^i(t))_{t\geq 0}$ be the random walks generating the IDLA cluster. Now let
	\begin{align*}
		M&:= |\{1\leq i\leq b_n\ | \ \tau^i_z < \tau^i(n)\} | = \sum_{i=1}^{b_n} \mathds{1}_{\tau^i_z < \tau^i(n) }\\
		L&:=\big|\{1\leq i\leq b_n\ |\ \sigma^i < \tau_z^i < \tau^i(n)\}\big| = \sum_{i=1}^{b_n} \mathds{1}_{\sigma^i < \tau^i_z < \tau^i(n)}
	\end{align*}
	where $\sigma^i:=\inf\{t\geq 0 \ |\ X^i(t) \notin \mathcal{I}(i-1)\}$ and $\tau^i_z := \inf\{t\geq 0\ |\ X^i(t)=z\}$. So $M$ equals the number of random walks visiting vertex $z$ before hitting the boundary $\partial_IB_\circ(n)$, whereas $L$ equals the number of those walks that visit $z$ after the according particle has already been added to the cluster. 
	Therefore we have: 
	$$\{z \notin \mathcal{I}(b_n)\} \subseteq \{M=L\}. $$ 
	Now for any $a\geq 0$ we have $$\Prob\big(z \notin \mathcal{I}(b_n)\big) \leq \Prob(M=L) \leq \Prob\big(\{M \leq a\} \cup \{L \geq a\}\big) \leq \Prob(M\leq a) + \Prob(L\geq a) \text{.}$$ We will look for a specific $a$ giving us bounds, that vanish fast enough. Now for $M$ the summands are obviously iid, whereas the summands of $L$ are not. Therefore we will instead consider $\tilde{L}$ satisfying $\Prob(\tilde{L} \geq a) \geq \Prob(L \geq a)$ for all $a\in \R$ as follows. 
	For every $y\in B_\circ(n)$ there is at most one $i$ for which $X^i({\sigma^i}) = y$, since $y$ is already inside the cluster for the following indices $j>i$. Additionally, after the time $\sigma^i$ the random walk $X^i$ has the same distribution as an in $y=X^i({\sigma^i})$  started random walk.
	With the random variable $\mathds{1}^y_{\tau_z<\tau(n)}$ denoting the indicator function of the event that a random walk started in $y$ visits vertex $z$ before hitting the boundary $\partial_IB_\circ(n)$ we have
	\begin{align*}
		L \stackrel{\mathcal{D}}{=} \sum_{i=0}^{b_n} \sum_{y\in B_{\circ}(n)} \mathds{1}_{\{X^i({\sigma^i}) = y\}} \mathds{1}^y_{\tau_z < \tau(n)} \leq \sum_{y\in B_\circ(n)} \mathds{1}^y_{\tau_z < \tau(n)} =:\tilde{L}.
	\end{align*}
	
	Now the summands in $\tilde{L}$ are independent and we can easily calculate
	\begin{align*}
		\E(\tilde{L}) &= \sum_{y\in B_\circ(n)} P_y(\tau_z < \tau(n)) 
			=  \frac{1}{g_n(z,z)}\sum_{y\in B_\circ(n)} g_n(y,z) = \frac{1}{g_n(z,z)} \E_z(\tau(n)),\\
		\E(M) &= b_n\ P_\circ(\tau_z < \tau(n)) =  b_n \frac{g_n(\circ,z)}{g_n(z,z)}. 
	\end{align*}
	
	With the help of the following lemma we are able to calculate $\E(M)-\E(\tilde{L})$:
	
\begin{lemma}
		Let $u$ be the odometer function of the divisible sandpile on $SG$ for the starting distribution $\mu_0 =|B_\circ(n)| \delta_0$, then it holds for all $x\in G$
		$${|B_\circ(n)|}\ g_n(\circ,x) - \sum_{y\in B_\circ(n)} g_n(y,x)  = u(x). $$
	\end{lemma}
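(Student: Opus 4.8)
The plan is to show that both sides, viewed as functions of $z$ on $B_\circ(n)$, solve the same discrete Poisson problem with the same (zero) boundary data, and then to invoke uniqueness. Write
\[
	F(z) := |B_\circ(n)|\,g_n(\circ,z) - \sum_{y\in B_\circ(n)} g_n(y,z), \qquad z\in B_\circ(n),
\]
so that the goal is to prove $F = u$ on $B_\circ(n)$. Two facts drive the computation. First, since $SG$ is $4$-regular the stopped Green function is symmetric, $g_n(a,b)=g_n(b,a)$, so I may replace each $g_n(y,z)$ by $g_n(z,y)$ and read $F$ as a linear combination of the functions $z\mapsto g_n(z,w)$. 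Second, a one-step decomposition of the walk shows that for every fixed $w$ and every interior $z\in B_\circ(n)\setminus\partial_I B_\circ(n)$ one has $\Delta_z g_n(z,w) = -\delta_w(z)$, while $g_n(a,b)=0$ as soon as one of $a,b$ lies in $\partial_I B_\circ(n)$ (the stopped sum is then empty, and the unrecorded final visit at time $\tau(n)$ is the only one that could land on the boundary).

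Applying $\Delta$ in the variable $z$ and using symmetry, for interior $z$ I would compute
\begin{align*}
	\Delta F(z) &= |B_\circ(n)|\,\Delta_z g_n(z,\circ) - \sum_{y\in B_\circ(n)}\Delta_z g_n(z,y) \\
	&= -|B_\circ(n)|\,\delta_\circ(z) + \sum_{y\in B_\circ(n)}\delta_y(z) = \mathds{1}_{B_\circ(n)}(z) - |B_\circ(n)|\,\delta_\circ(z),
\end{align*}
where the last step uses that an interior $z$ lies in $B_\circ(n)$, so exactly one term of $\sum_y\delta_y(z)$ survives. This is precisely $\Delta u(z)$, since the odometer of $\mu_0=|B_\circ(n)|\delta_\circ$ satisfies $\Delta u = \mu-\mu_0 = \mathds{1}_{B_\circ(n)}-|B_\circ(n)|\delta_\circ$ on $\mathcal{S}=B_\circ(n)$. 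Here it is important that every neighbour of an interior vertex still lies in $B_\circ(n)$, so these are genuine graph-Laplacian identities and not merely statements about a restricted operator.

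It remains to match boundary data and to conclude. On $\partial_I B_\circ(n)$ the Green function vanishes whenever one argument is a boundary point, so every summand of $F$ is zero and $F\equiv 0$ there; likewise $u\equiv 0$ on $\partial_I B_\circ(n)$, as already observed (otherwise mass would escape the cluster). Hence $h:=F-u$ is harmonic on the entire interior $B_\circ(n)\setminus\partial_I B_\circ(n)$ — the two $\delta_\circ$ contributions cancel, so harmonicity holds at $\circ$ as well — and vanishes on $\partial_I B_\circ(n)$. Since $B_\circ(n)$ is finite and connected with its interior attached to the absorbing boundary, the maximum principle forces $h\equiv 0$, yielding $F=u$. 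The step I would check most carefully, and the only genuine obstacle, is the bookkeeping at $\partial_I B_\circ(n)$: one must verify that $g_n$ vanishes as soon as one argument is a boundary vertex, so that $F$ and $u$ carry identical zero boundary values, as the whole argument rests on solving a single Dirichlet problem rather than comparing two different ones.
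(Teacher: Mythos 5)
Your proposal is correct and follows essentially the same route as the paper: identify the left-hand side as the solution of the Dirichlet problem $\Delta h = \mathds{1}_{B_\circ(n)} - |B_\circ(n)|\delta_\circ$ with zero data on $\partial_I B_\circ(n)$, which the odometer also solves, and conclude by uniqueness. Your added care about the symmetry of $g_n$, the sign of $\Delta_z g_n(z,w)$, and the vanishing of $g_n$ on the inner boundary only makes explicit what the paper leaves implicit.
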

	\begin{proof}
		By Lemma \ref{sandpileDirichletLemma} and Lemma \ref{sandpileClusterLemma} the odometer function solves the following Dirichlet problem.
		\begin{align*}
			\begin{cases}
				\Delta u(x) = \Big(1 - |B_\circ(n)|\delta_\circ(x) \Big), \text{ if } x\in B_\circ(n)\backslash\partial_I B_\circ(n) \\
				u(x) = 0 , \text{ if } x\in \partial_I B_\circ(n)
			\end{cases}
		\end{align*}
		Now let $h_n(z) =  {|B_\circ(n)|}\ g_n(\circ,z) - \sum_{y\in B_\circ(n)} g_n(y,z)$ then we have for $z\in B_\circ(n)\backslash\partial_I B_\circ(n)$
		\begin{align*}
			\Delta h_n (x) &= \sum _{y\in B_\circ(n)} \delta_y(x) - |B_\circ(n)| \delta_0(x) \\
			&= (1-|B_\circ(n)| \delta_\circ(x))
		\end{align*}
		and for $x\in\partial_I B_\circ(n)$, $y\in SG$ we have $g_n(y,x)=0$ by definition. So $h_n$ and $u$ solve the same Dirichlet problem and by the uniqueness principle we have $h_n = u$.
	\end{proof}
	Now since by Lemma \ref{hilfsLemma} $\E_z(\tau(n))\geq c_1\  d(z,\partial_I B_\circ(n))^\beta$, $g_n(z,z)\leq c_2\ d(z,\partial_I B_\circ(n))^{\beta-\alpha}$ we have the two following major inequalities
	\begin{align*}
		c\ n^\alpha \geq \E(M) \geq \E(\tilde{L})&\geq \frac{{c}}{g_n(z,z)} d(z,\partial_I B_\circ(n))^\beta \geq c\ d(z,\partial_I B_\circ(n))^\alpha, \\
		\E(M) - \E(\tilde{L}) &= \frac{1}{g_n(z,z)} \bigg( {|B_\circ(n)|}\ g_n(\circ,z) - \sum_{y\in B_\circ(n)} g_n(y,z) \bigg) \\
		&\geq c\ d(z,\partial_I B_\circ(n))^{-(\beta-\alpha)}\ u(z) \geq c\  d(z,\partial_I B_\circ(n))^\alpha
	\end{align*} 
	where in the last inequality we used the bound $u(z)\geq c d(z,\partial_I B_\circ(n))^\beta$ from Lemma \ref{lemmaOdometerBound}.
	
	\begin{lemma}[{\cite[Lemma 4]{lawler1992}}]
		Let $S$ be a finite sum of independent indicator functions and $\E(S)=\mu$. Then for all sufficiently large $n$ and $0<\gamma<\frac{1}{2}$
		\begin{align*}
			\Prob(|S-\mu|\geq \mu^{\frac{1}{2}+\gamma}) \leq 2 \exp\Big(-\frac{1}{4} \mu^{2\gamma}\Big).
		\end{align*}
	\end{lemma}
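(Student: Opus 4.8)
The plan is to prove this as a standard Chernoff (exponential Markov) bound, handling the upper and lower deviations separately and summing. Writing $S=\sum_i Y_i$ with the $Y_i$ independent indicators, set $p_i=\E(Y_i)$ so that $\mu=\sum_i p_i$, and bound
\[
  \Prob(|S-\mu|\geq \mu^{1/2+\gamma}) \leq \Prob(S-\mu\geq \mu^{1/2+\gamma}) + \Prob(\mu-S\geq \mu^{1/2+\gamma}).
\]
I would estimate each tail by the same method, so that the factor $2$ in the statement arises precisely from adding the two contributions.

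For the upper tail, fix $\lambda>0$ and apply Markov's inequality to $e^{\lambda(S-\mu)}$. By independence this factorizes, and on each Bernoulli factor the elementary inequality $1+p_i(e^{\lambda}-1)\leq \exp\!\big(p_i(e^{\lambda}-1)\big)$ collapses the product to $\exp\!\big(\mu(e^{\lambda}-1-\lambda)\big)$. For $0<\lambda\leq \tfrac12$ one has $e^{\lambda}-1-\lambda\leq \lambda^2$, giving $\Prob(S-\mu\geq t)\leq \exp(-\lambda t+\mu\lambda^2)$. Taking $t=\mu^{1/2+\gamma}$ and the order-optimal choice $\lambda=\tfrac12\mu^{\gamma-1/2}$ makes the exponent equal to $-\tfrac14\mu^{2\gamma}$, which is exactly the asserted upper-tail bound. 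The lower tail is symmetric: applying Markov to $e^{-\lambda(S-\mu)}$ and using $e^{-\lambda}-1+\lambda\leq \lambda^2$ (valid for all $\lambda\geq 0$) yields the identical estimate $\exp(-\tfrac14\mu^{2\gamma})$ with the same $\lambda$.

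I do not expect a genuine obstacle here, as this is a clean large-deviation estimate; the only point of care is the admissibility constraint $\lambda\leq\tfrac12$ under which the truncation $e^{\lambda}-1-\lambda\leq\lambda^2$ was used. Since $\gamma<\tfrac12$ the exponent $\gamma-\tfrac12$ is negative, so the chosen $\lambda=\tfrac12\mu^{\gamma-1/2}$ satisfies $\lambda\leq\tfrac12$ as soon as $\mu\geq 1$; and in every application of this lemma $\mu$ grows polynomially in $n$ (it is a value of $\E(M)$ or $\E(\tilde L)$), so the constraint — and hence the full bound — holds for all sufficiently large $n$. Matching the constant in the truncation to the optimal $\lambda$ is the one bookkeeping step I would carry out with care to land exactly on the factor $\tfrac14$.
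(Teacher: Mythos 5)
Your Chernoff-bound argument is correct: the exponential-moment factorization, the truncation $e^{\pm\lambda}-1\mp\lambda\leq\lambda^2$, and the choice $\lambda=\tfrac12\mu^{\gamma-1/2}$ do land exactly on $\exp(-\tfrac14\mu^{2\gamma})$ per tail, and your handling of the admissibility constraint $\lambda\leq\tfrac12$ (i.e.\ $\mu\geq1$, guaranteed for large $n$ in every application) is the right reading of the ``sufficiently large $n$'' clause. The paper does not prove this lemma but only cites \cite[Lemma~4]{lawler1992}, and your proof is essentially the standard argument given there, so nothing further is needed.
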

	Choose $\gamma,\gamma'$ such that $\E(M)^{2\gamma}=\ln(n)^{1+\kappa}$ and $\E(L)^{2\gamma'}=\ln(n)^{1+\kappa}$ then we have for $p_n := cn^{-\ln(n)}$ declining faster to $0$ than any polynomial:
	\begin{align*}
		p_n &\geq \Prob(|M-\E(M)|\geq \E(M)^{\frac{1}{2}}\ln(n)^\frac{1+\kappa}{2}) + \Prob(|L-\E(L)|\geq \E(L)^{\frac{1}{2}}\ln(n)^\frac{1+\kappa}{2}) \\
		&\geq \Prob(M \leq E(M)-\E(M)^{\frac{1}{2}}\ln(n)^\frac{1+\kappa}{2}) + \Prob(L\geq \E(L)+\E(L)^{\frac{1}{2}}\ln(n)^\frac{1+\kappa}{2})\\
		&\geq \Prob(M\leq a) + \Prob(L\geq a)
	\end{align*}
	if $a\in [\E(L)+\E(L)^{\frac{1}{2}}\ln(n)^\frac{1+\kappa}{2}, \E(M)-\E(M)^{\frac{1}{2}}\ln(n)^\frac{1+\kappa}{2}]=:I$. \\
	Now, since $\E(M) - \E(\tilde{L})\geq c\  d(z,\partial_I B_\circ(n))^\alpha$ and $\E(M)\leq c n^\alpha$, the above interval $I$ is nonempty if $d(z,\partial_I B_\circ(n))^\alpha \geq c n^{\frac{\alpha}{2}} \ln(n)^\frac{1+\kappa}{2}$.
	To finish the proof one uses the upper bound of $p_n$ with Borel-Cantelli. $\square$
	
	The bottleneck of the proof is obviously the very large upper bound of $\E(M) \leq c n^\alpha$. Sadly, this poor bound can not be improved without considering the specific position in the graph: Consider the boundary points of proper triangles $z$ at distance $2^m$ from $\circ$ which satisfy $\Prob_\circ(\tau_z < \tau(n)) \geq \frac{1}{2}$ for any $n> 2^m$ and therefore $\E(M)\geq c n^\alpha$.
	
	\section{The outer bound}
	The improved outer bound is a direct consequence from our inner bound: With very little adjustments one can deduce our bound from the results in \cite[Section 3.2]{idlaSG}. Since our result can be applied to any inner bound, and for the sake of completeness, we will quickly sketch this here.
	
	The idea of the proof is to consecutively stop the particles generating the cluster when leaving balls of growing radius. For this we will need a little more general notation than we introduced in Section 1.	
	For an already existing cluster $S\subseteq SG$ we define the IDLA cluster after starting a particle in vertex $x$ and stopping upon leaving the set $A$ as
	\begin{align*}
		\mathcal{I}(S;x\rightarrow A) := S\  \cup \big\{X\big(\min(\sigma_S,\sigma_A-1)\big)\big\},
	\end{align*}
	where $\sigma_A:= \inf \{t\geq 0 | X(t) \notin A \}$ and the paused particles as
	\begin{align*}
		\mathcal{P}(S;x\rightarrow A) := \begin{cases}
			X(\sigma_A) \text{ if } \sigma_A \leq \sigma_S \\
			\bot \text{ otherwise,}
		\end{cases}
	\end{align*}
	where $\bot$ connotes that the random walk was not paused and the particle has settled in the cluster. For starting consecutive particles from vertices $x_1,\ldots,x_n$ we write $\mathcal{I}(S;x_1,\ldots,x_n\rightarrow A)$ for the resulting cluster and $\mathcal{P}(S;x_1,\ldots,x_n\rightarrow A)$ for the sequence of paused particles.
	Due to the abelian property of the IDLA cluster (see \cite{growthAlgebra}), that is \break
	\begin{align*}
		\mathcal{I}\Big( \mathcal{I}(S;x_1,\ldots,x_n\rightarrow A); \mathcal{P}(S;x_1,\ldots,x_n\rightarrow A)\Big) \overset{\mathcal{D}}{=} \mathcal{I}(S;x_1,\ldots,x_n\rightarrow SG),
	\end{align*}
	it is possible to work on consecutively stopped clusters instead of the unstopped one.
	For the ease of notation we denote
	\begin{align*}
		&\mathcal{I}_n(x\rightarrow r) := \mathcal{I}(\emptyset; \underbrace{\circ,\ldots, \circ}_{n \text{ times}} \rightarrow B_\circ(r)), &\mathcal{P}_n(x\rightarrow r) := \mathcal{P}(\emptyset; \underbrace{\circ,\ldots, \circ}_{n \text{ times}} \rightarrow B_\circ(r)).
	\end{align*}
	
	The next lemma will be essential for the proof and claims that with high probability at least a proportion $\delta$ of started particles will settle before reaching the next radius at wich they will be stopped again.
	\begin{lemma}[{\cite[Lemma 3.7]{idlaSG}}]\label{outBoundHilfLemma}
		There are $\delta >0$ and $p<1$, such that for all $n$ large enough with $n^{\frac{1}{\alpha+1}}<k< n^{\alpha}$ we have for $x_1,\ldots,x_k\in B_\circ(n)$ and $S\subseteq B_\circ(n)$
		\begin{align*}
			\Prob\Big(\big| \mathcal{I}(S;x_1,\ldots,x_k \rightarrow B_\circ(n+k^\frac{1}{\alpha})\big) \backslash S \big| \leq \delta k \Big) \leq p^k.
		\end{align*}
	\end{lemma}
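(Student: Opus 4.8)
The plan is to reduce the statement to a lower bound on the number of particles that settle, and then to release the particles one at a time, controlling each settling event by a uniform positive probability. Writing $R := n + k^{1/\alpha}$ and $N := |\mathcal{I}(S;x_1,\dots,x_k\to B_\circ(R))\setminus S|$, I first observe that $N$ counts exactly the settling particles: a settling particle is placed at the first site outside the current cluster it visits, hence contributes a distinct new vertex, whereas a paused particle is placed at $X(\sigma_A-1)$, a vertex already in the cluster, and contributes nothing. Thus $N = k - P$ with $P$ the number of paused particles, and it suffices to show $\Prob(N \le \delta k) \le p^k$. I reveal the particles successively, let $\mathcal{F}_{i-1}$ be the information generated by $X^1,\dots,X^{i-1}$, write $\xi_i$ for the indicator that particle $i$ settles and set $N_{i-1}=\xi_1+\dots+\xi_{i-1}$.

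The core step is to produce a constant $q>0$ and then fix the lemma's $\delta$ small, with $\delta<q$, so that on the event $\{N_{i-1}<\delta k\}$ one has
$$\Prob\big(\xi_i=1 \mid \mathcal{F}_{i-1}\big) \ge q \qquad\text{for all } i\le k.$$
Here both hypotheses on $k$ enter. The condition $k<n^\alpha$ gives $k^{1/\alpha}<n$, so $R<2n$ and $(V_\alpha)$ yields $|\mathcal{A}|\ge c\,k$ with uniform constants for the annulus $\mathcal{A}:=B_\circ(R)\setminus B_\circ(n)$, while $k>n^{1/(\alpha+1)}$ forces the radial thickness $k^{1/\alpha}$ to diverge, so that crossing $\mathcal{A}$ is a genuinely fractal traversal. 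On $\{N_{i-1}<\delta k\}$ at most $\delta k$ vertices of $\mathcal{A}$ are occupied, so for $\delta$ small the unoccupied set $U\subseteq\mathcal A$ satisfies $|U|\ge \tfrac12|\mathcal A|\ge c\,k$. A particle fails to settle only if it crosses $\mathcal{A}$ from inner to outer boundary without ever meeting $U$; but a crossing of an annulus of thickness $k^{1/\alpha}$ needs time of order $(k^{1/\alpha})^\beta$ and visits of order $k$ distinct sites (range $\asymp$ time$^{\alpha/\beta}$), while $U$ already has order $k$ sites, so with positive probability the walk must hit $U$. I would make this quantitative via the first-visit bound $\Prob_x(\tau_U<\tau(R)) \ge \big(\sum_{y\in U} g_R(x,y)\big)\big/\max_w\sum_{y\in U} g_R(w,y)$, comparing numerator and denominator through the elliptic Harnack inequality $(EHI)$ together with the Green-function bounds of Lemma \ref{hilfsLemma}; since $U$ fills a fixed proportion of $\mathcal{A}$, this ratio is bounded below by a constant $q=q(\delta)$ that stays positive as $\delta\to0$, uniformly in $n$, $i$ and the configuration, which permits the choice $\delta<q$.

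Given the claim, a standard coupling concludes. I couple the $\xi_i$ with i.i.d.\ Bernoulli$(q)$ variables $\eta_i$ so that $\xi_i\ge\eta_i$ whenever $N_{i-1}<\delta k$. On $\{N\le\delta k\}$ one has $N_{i-1}\le\delta k$ for every $i$, whence $N=\sum_{i=1}^k\xi_i\ge\sum_{i=1}^k\eta_i$, so $\{N\le\delta k\}\subseteq\{\sum_i\eta_i\le\delta k\}$. As $\sum_i\eta_i$ is Binomial$(k,q)$ and $\delta<q$, the large-deviation estimate \cite[Lemma 4]{lawler1992} (or a direct Chernoff bound) gives $\Prob(\sum_i\eta_i\le\delta k)\le p^k$ for some $p<1$, which is the assertion.

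The main obstacle is precisely the uniform per-particle bound $q$: everything rests on showing that a random walk launched anywhere in $B_\circ(n)$ meets the unoccupied part of the thick annulus before escaping $B_\circ(R)$, with probability bounded away from $0$ uniformly in the (adversarial) occupied configuration and the starting vertex. This is where the fractal geometry of $SG$ is essential, through the volume growth $(V_\alpha)$, the exit-time growth $(E_\beta)$ and the Harnack comparison $(EHI)$, which together convert the statement ``the annulus is thick and a constant fraction empty'' into a genuine constant lower bound on the hitting probability.
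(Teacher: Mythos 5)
The paper does not prove this lemma at all: it is quoted verbatim from \cite[Lemma 3.7]{idlaSG}, so there is no in-paper argument to compare against. Your overall architecture (release the particles one at a time, show each settles with conditional probability at least $q$ as long as fewer than $\delta k$ have settled, dominate by i.i.d.\ Bernoulli$(q)$ variables, finish with a Chernoff bound) is the standard one and matches the cited source; the counting step $N=k-P$ and the coupling step are fine. The problem is the one step you yourself flag as the main obstacle: the first-visit ratio with the \emph{global} unoccupied set $U$ does not produce a uniform constant. The denominator $\max_w\sum_{y\in U}g_R(w,y)$ is of order $|U|\cdot (k^{1/\alpha})^{\beta-\alpha}$, and $|U|$ is essentially $|\mathcal A|$, which can be as large as $n\,k^{(\alpha-1)/\alpha}\gg k$ (the annulus of thickness $h=k^{1/\alpha}$ has up to $\asymp n h^{\alpha-1}$ vertices, not $\asymp h^\alpha$). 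Meanwhile, for a starting point $x\in\partial_I B_\circ(n)$ the numerator $\sum_{y\in U}g_R(x,y)$ is governed by the $\asymp h^\alpha=k$ annulus sites within distance $O(h)$ of $x$ (the walk typically exits before exploring the rest of the annulus), so it is only of order $h^\beta$ up to lower-order far-field corrections. The ratio therefore degenerates like a power of $h/n=k^{1/\alpha}/n$, which tends to $0$ throughout the allowed range $k<n^\alpha$; the claim that it ``stays positive uniformly in $n$'' is false as stated.

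The fix is a localization, and it is exactly where the hypothesis $|O|\le\delta k$ does its work. Apply the strong Markov property at the first exit of $B_\circ(n)$, landing at some $w\in\partial_I B_\circ(n)$, and replace $U$ by $U_w:=U\cap B_w(Ch)\cap\{y:d(y,\partial_I B_\circ(R))\ge h/2\}$. By $(V_\alpha)$ this set of candidate sites has at least $c h^\alpha=ck$ elements before deletion, hence at least $(c-\delta)k$ elements for $\delta<c$, no matter how the adversary places the $\le\delta k$ occupied sites. For this localized set both sides of your ratio are of order $h^\beta$: the denominator because $\max_v\sum_{y\in U_w}g_R(v,y)\le\sum_{y\in U_w}g_R(y,y)\le Ck\,h^{\beta-\alpha}$, and the numerator because $\Prob_w(\tau_y<\tau(R))\ge c$ for $y$ within distance $O(h)$ of $w$ and at distance $\ge h/2$ from the outer boundary (a strongly recurrent hitting estimate via $(EHI)$ and the resistance/Green bounds, which needs the lower bound $g_R(y,y)\ge c\,d(y,\partial_I B_\circ(R))^{\beta-\alpha}$ that Lemma \ref{hilfsLemma} does not state). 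With that modification your scheme goes through; without it, the central inequality fails quantitatively even though the heuristic (range of a crossing $\gtrsim k$ versus $|O|\le\delta k$) that precedes it is the right one.
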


	With this we are now able to establish an outer bound depending on the already proved inner bound in Section \ref{sectionInnerBound}.	
	
	\begin{thm}[Outer bound in dependence of the inner bound]
		Suppose we already established an inner bound $D_I(n)$ on $SG$ such that 
		$$\sum_{n\in\N}\Prob \Big(B_\circ(n-D_I(n)) \nsubseteq \mathcal{I}_{b_n}(\circ\rightarrow n)\Big) < \infty $$
		
		Then for any $D \geq D_I^{1-\frac{1}{\alpha}} n^\frac{1}{\alpha}$ it holds with probability $1$ for $n$ large enough
		\begin{align*}
			\mathcal{I}(b_n) \subseteq B_\circ(n+D).
		\end{align*}  
	\end{thm}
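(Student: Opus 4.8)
The plan is to use the abelian property to reduce the outer bound to a depletion argument on the particles that overshoot radius $n$, feeding in the inner-bound hypothesis through a count of these overshooting particles. First I would apply the abelian property with $S=\emptyset$, the $b_n$ particles launched at $\circ$, and $A=B_\circ(n)$ to write $\mathcal{I}(b_n) \overset{\mathcal{D}}{=} \mathcal{I}\big(\mathcal{I}_{b_n}(\circ\to n);\,\mathcal{P}_{b_n}(\circ\to n)\to SG\big)$. Writing $C_0:=\mathcal{I}_{b_n}(\circ\to n)\subseteq B_\circ(n)$ and $P_0:=\mathcal{P}_{b_n}(\circ\to n)$, every launched particle either settles inside $B_\circ(n)$ or is paused, so $|P_0|=b_n-|C_0|$. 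On the (summable-complement) event of the inner-bound hypothesis one has $B_\circ(n-D_I(n))\subseteq C_0$, hence $|C_0|\ge b_{n-D_I}$, and the number of overshooting particles is controlled by an annular volume,
\[
k_0:=|P_0|\ \le\ b_n-b_{n-D_I}.
\]

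The key quantitative input, and what I expect to be the main obstacle, is a \emph{worst-case} bound on this annulus. The naive Euclidean guess $b_n-b_{n-D_I}\asymp n^{\alpha-1}D_I$ is only the \emph{typical} value: on $SG$ the geodesic spheres $|\partial_I B_\circ(m)|$ fluctuate enormously, reaching size of order $m$ near the far corners of the triangles $V_k$ (as one sees from the self-similar recursion for the sphere sizes, roughly $s_{k+1}(j)=s_k(j)$ for $j\le 2^k$ and $s_{k+1}(j)=2\,s_k(j-2^k)$ for $j>2^k$, whose maximum at scale $2^k$ is of order $2^k$). For a bound valid for \emph{all} large $n$ I would instead cover the annulus $B_\circ(n)\setminus B_\circ(n-D_I)$ by the $\asymp n/D_I$ triangles of side length $\asymp D_I$ of the corresponding subdivision that meet $\partial_I B_\circ(n)$; by $(V_\alpha)$ each such triangle has volume $\le C D_I^{\alpha}$, whence
\[
b_n-b_{n-D_I}\ \le\ C\,\frac{n}{D_I}\,D_I^{\alpha}\ =\ C\,n\,D_I^{\alpha-1}.
\]
This honest fractal estimate is exactly what produces the exponent $1-\tfrac1\alpha$ in the statement, so getting the covering count right — rather than the cheaper two-sided bound $(V_\alpha)$, which only yields $b_n-b_{n-D_I}\lesssim n^\alpha$ and hence the trivial linear outer bound — is the crux.

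With $k_0\le C\,n\,D_I^{\alpha-1}$ in hand, I would deplete the paused particles in rounds using Lemma \ref{outBoundHilfLemma}. Starting from $r_0=n$ and the $k_0$ particles of $P_0$ (sitting on $\partial_I B_\circ(n)$), at round $j$ I release the currently paused $k_j$ particles stopped at radius $r_{j+1}:=r_j+k_j^{1/\alpha}$; the lemma gives that with probability at least $1-p^{k_j}$ at least a fraction $\delta$ of them settle, so $k_{j+1}\le(1-\delta)k_j$. The paused counts decay geometrically, $k_j\le(1-\delta)^j k_0$, and the total radius gained telescopes into a convergent geometric series,
\[
\sum_{j\ge0}k_j^{1/\alpha}\ \le\ k_0^{1/\alpha}\sum_{j\ge0}(1-\delta)^{j/\alpha}\ =\ C'\,k_0^{1/\alpha}\ \le\ C''\,\big(n\,D_I^{\alpha-1}\big)^{1/\alpha}\ =\ C''\,D_I^{1-\frac1\alpha}\,n^{\frac1\alpha}.
\]
One runs the rounds only while $k_j$ exceeds the admissibility threshold $r_j^{1/(\alpha+1)}\asymp n^{1/(\alpha+1)}$ of the lemma; the leftover $O(n^{1/(\alpha+1)})$ particles are discharged by a single final stopping step, contributing $O(n^{1/(\alpha+1)})=o\big(D_I^{1-1/\alpha}n^{1/\alpha}\big)$ to the radius and thus absorbed into the constant.

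Finally I would upgrade to an almost-sure statement by Borel--Cantelli. The inner-bound failure is summable in $n$ by hypothesis, while the union of the per-round failures is at most the number of rounds, $O(\ln n)$, times the largest term $p^{k_{\min}}$ with $k_{\min}\gtrsim n^{1/(\alpha+1)}$, which decays faster than any polynomial and is therefore summable in $n$. On the complementary full-probability event, for all large $n$ the construction keeps the cluster inside $B_\circ\big(n+C''D_I^{1-1/\alpha}n^{1/\alpha}\big)$; after absorbing $C''$ into the constant (as elsewhere in the paper), this gives $\mathcal{I}(b_n)\subseteq B_\circ(n+D)$ for every $D\ge D_I^{1-1/\alpha}n^{1/\alpha}$.
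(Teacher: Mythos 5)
Your proposal is correct and follows essentially the same route as the paper: reduce via the abelian property to the paused particles $k_0\le b_n-b_{n-D_I}$, bound this annulus by $c\,n\,D_I^{\alpha-1}$, deplete geometrically using Lemma \ref{outBoundHilfLemma}, sum the radius increments as a geometric series, and finish with Borel--Cantelli. Two small remarks: the annulus estimate you treat as the crux is exactly the cited \cite[Lemma 3.8]{idlaSG} (so no new covering argument is needed), and in the union bound over round failures you should replace the conditional ``$O(\ln n)$ rounds'' by a deterministic polynomial bound on the number of rounds (the paper uses $n^\alpha$), since the round count is only logarithmic on the success event you are trying to establish.
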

	
\begin{proof}
	We will show that the probability of the event $\{\mathcal{I}(b_n) \nsubseteq B_\circ(n+D)\}$ is summable. The claim then follows directly from the Borel-Cantelli Lemma.
	
	For this, let
	\begin{align*}
		n_0 &:= n \text{,} &n_{j+1} &:= \begin{cases}
			n_j+k_j^{\frac{1}{\alpha}} \text{ if } k_j > n^{\frac{1}{\alpha+1}}_j \\
			\infty \text{ otherwise,}
		\end{cases}\\
		\mathcal{I}_0 &:= \mathcal{I}_{b_n}(0 \rightarrow n) \text{,} &\mathcal{I}_{j+1} &:= \mathcal{I}\big(\mathcal{I}_{j}; \mathcal{P}_j \rightarrow B_\circ(n_{j+1})\big), \\
		\mathcal{P}_0 &:= \mathcal{P}_{b_n}(0 \rightarrow n) \text{,} & \mathcal{P}_{j+1} &:= \mathcal{P}\big(\mathcal{I}_{j}; \mathcal{P}_{j} \rightarrow  B_\circ(n_{j+1})\big),\\
		k_0 &:= |\mathcal{P}_0| & k_{j+1} &:= |\mathcal{P}_{j+1}|
	\end{align*}
	be ascending clusters, whose union form the IDLA cluster. This iterative construction will stop, once we do not have enough stopped particles in $\mathcal{P}_j$. 
	
	Let $J:=\min\{j | k_j \leq n_j^{\frac{1}{\alpha+1}} \}$ be the time after which we let all particles in $\mathcal{P}_J$ evolve until settlement. Now for $j>J$ we have $\mathcal{I}_j = \mathcal{I}_{j+1}$ and by the abelian property of the IDLA cluster $\mathcal{I}_{J+1}$ and $\mathcal{I}(b_n)$ have the same distribution. By construction $\mathcal{I}_J\subseteq B_\circ(n_J)$ and since $k_J \leq n_J^{\frac{1}{\alpha+1}}$ we can deduce $\mathcal{I}_{J+1}\subseteq B_\circ(n_J+n_J^{\frac{1}{\alpha+1}})$. Therefore we have
	\begin{align*}
		\Prob\Big(\mathcal{I}(b_n) \nsubseteq B_\circ(n+D) \Big) &= \Prob\Big(\mathcal{I}_{J+1} \nsubseteq B_\circ(n+D) \Big) \\
		&\leq \Prob\Big(B_\circ(n_J + n_J^{\frac{1}{\alpha+1}} )\nsubseteq B_\circ(n+D) \Big) \\
		&= \Prob\Big(n_J+ n_J^{\frac{1}{\alpha+1}} > n+D \Big) \text{.}
	\end{align*}

	For $SG$ there is a nice annulus growth bound shown in \cite[Lemma 3.8]{idlaSG}: for all $\varepsilon>0$ it holds $b_n-b_{n(1-\varepsilon)} \leq 4 \varepsilon^{\alpha-1} b_n$ for $n$ large enough. Furthermore, by assumption and Borel-Cantelli we have with probability $1$ for $n$ large enough $B_\circ(n-D_I)\subseteq \mathcal{I}_{b_n}(0\rightarrow n)$. From this we can deduce $k_0 \leq b_n - |\mathcal{I}_{b_n}(0 \rightarrow n)| \leq b_n - b_{n-D_I} \leq c D_I^{\alpha-1} n < n^\alpha$ almost surely for $n$ large enough. 
	Therefore we have
	 \begin{align*} 
	 	n_J^\frac{1}{\alpha+1} < k_{J-1} < \ldots < k_0 < n^\alpha \text{ almost surely.}
	 \end{align*}
 	So the amount of paused particles $k_{j-1}$ satisfies $n_j^\frac{1}{\alpha+ 1} < k_{j-1} < n^\alpha < n_j^\alpha$ almost surely and we can apply Lemma \ref{outBoundHilfLemma}. Accordingly there are $\delta<1$ and $p<1$ such that 
 	\begin{align*}
 		\Prob\Big( \big| \mathcal{I}\big(S; \mathcal{P}_{j-1} \rightarrow B_\circ(n_{j-1}+l^\frac{1}{\alpha})\big)\backslash S \big| \leq (1-\delta) l \Big) \leq p^l.
 	\end{align*}
	Notice that $k_j - k_{j-1} = \big|\mathcal{I}_j\backslash\mathcal{I}_{j-1}\big|=\big| \mathcal{I}\big(\mathcal{I}_{j-1}; \mathcal{P}_{j-1} \rightarrow B_\circ(n_{j-1}+k_{j-1}^\frac{1}{\alpha})\big)\backslash \mathcal{I}_{j-1}\big|$ which are exactly number of settled particles in wave $j$. With the total law of probability we get
 	\begin{align} 
 		&\Prob\big(k_j \geq (1-\delta) k_{j-1} \big| n_{j-1}=i, k_{j-1} = l\big)\notag  \\
 		&= \sum_{S\subset B_\circ(i)} \Prob\Big( \big| \mathcal{I}\big(S; \mathcal{P}_{j-1} \rightarrow B_\circ(n_{j-1}+l^\frac{1}{\alpha})\big) \backslash S \big| \leq (1-\delta) l \Big| \mathcal{I}_{j-1} = S, n_{j-1}=i, k_{j-1} = l \Big) \cdot \notag\\[-3mm]
 		& \hspace{1.75cm}\Prob(\mathcal{I}_{j-1} = S \big| n_{j-1}=i, k_{j-1} = l) \notag\\
 		&\leq \sum_{\substack{S\subset B_\circ(i)}} p^l \Prob(\mathcal{I}_{j-1} = S \big| n_{j-1}=i, k_{j-1} = l) = p^l. \label{EquationStar}
 	\end{align}
 	With this at hand, by applying the total law of probability twice, we can deduce
 	\begin{align*}
 		&\Prob\big(k_j\geq (1-\delta)k_{j-1} \cap j \leq J \big) \phantom{\sum_i^k}\\
 		&\leq \sum_{i=n}^\infty \Prob(n_{j-1} = i ) \sum_{l = i^{1/(\alpha+1)}}^{n^\alpha} \Prob\big(k_j \geq (1-\delta) k_{j-1} | n_{j-1}=i, k_{j-1} = l\big)\cdot \\[-3.5mm]
 		& \hspace{7.5cm}\Prob(k_{j-1} = l \cap j \leq J | n_{j-1} = i ) \\
 		&\stackrel{(\ref{EquationStar})}{\leq} \sum_{i=n}^\infty \Prob(n_{j-1} = i ) \sum_{l = i^{1/(\alpha+1)}}^{n^\alpha} p^l \Prob(k_{j-1} = l \cap j \leq J | n_{j-1} = i ) \leq p^{n^{1/(\alpha+1)}}
 	\end{align*}
 	
 	and using this bound finally gives
 	\begin{align*}
 		\Prob\Big(\exists_{1\leq j \leq J} ~k_j \geq (1-\delta) k_{j-1} \Big) 
 		\leq \sum_{j=1}^{n^\alpha} \Prob\big(k_j\geq (1-\delta)k_{j-1} \cap j \leq J \big) \leq n^\alpha p^{n^{1/(\alpha+1)}}.
 	\end{align*}

 	For the complementary events we have 
 	\begin{align*} 
 		&\{ \forall_{1\leq j \leq l} ~k_j < (1-\delta) k_{j-1}\} \subseteq \{ k_l < (1-\delta)^l k_0 \}  \\
 		\Leftrightarrow &\{ \exists_{1\leq j \leq l} ~k_j \geq (1-\delta) k_{j-1}\} \supseteq \{ k_l \geq (1-\delta)^l k_0 \}
 	\end{align*}
	and therefore the according probabilities satisfy
 	\begin{align*}
 		\Prob\big(\exists_{1\leq l \leq J} ~k_l \geq (1-\delta)^l k_0 \big) 
 		= \Prob\big(\exists_{1\leq j \leq J} ~k_j \geq (1-\delta) k_{j-1}\big)
 		\leq  n^\alpha p^{n^{1/(\alpha+1)}}.
 	\end{align*}
 	On the complementary event $\{\forall_{1\leq j \leq J} ~k_j < (1-\delta)^j k_0) \}$ the following inequality holds: 
 	\begin{align*}
 		n_J = n + \sum_{j=0}^{J-1} k_j^\frac{1}{\alpha} < n + k_0^\frac{1}{\alpha} \sum_{j=0}^{J-1} \big((1-\delta)^\frac{1}{\alpha}\big)^j < n+ k_0^\frac{1}{\alpha}  \frac{1}{1-(1-\delta)^\frac{1}{\alpha}} \text{.}
 	\end{align*}
 	If additionally $n_J + n_J^{1/(\alpha+1)} > n +D$ holds, there is $c'= \Big(1+ \frac{1}{1-(1-\delta)^\frac{1}{\alpha}}\Big)^{1/(\alpha+1)}$, such that for $n$ large enough
 	\begin{align*}
 		&n+ k_0^\frac{1}{\alpha}  \frac{1}{1-(1-\delta)^\frac{1}{\alpha}} + c' n^{1/(\alpha+1)} \\
 		&> n+ k_0^\frac{1}{\alpha}  \frac{1}{1-(1-\delta)^\frac{1}{\alpha}} +  \Big(n+ k_0^\frac{1}{\alpha}  \frac{1}{1-(1-\delta)^\frac{1}{\alpha}}\Big)^{1/(\alpha+1)} \\
 		&> n_J +  n_J^{1/(\alpha+1)} > n +D
 	\end{align*}
 	From this we can easily deduce
 	\begin{align*}
 		&k_0^\frac{1}{\alpha} > (D- c' n^{1/(\alpha+1)})(1-(1-\delta)^\frac{1}{\alpha}) > c_1 D  \text{ for $n$ large enough}\\
 		&\Rightarrow k_0 > c_2 D^\alpha \text{.}
 	\end{align*}
 	By conditioning on the event $n_J < n+ k_0^\frac{1}{\alpha}  \frac{1}{1-(1-\delta)^\frac{1}{\alpha}}$ we obtain
 	\begin{align*}
 		&\Prob(\mathcal{I}(b_n) \nsubseteq B_\circ(n+D) ) \leq \Prob\Big(n_J+ n_J^{\frac{1}{\alpha+1}} > n+D \Big) \\
 		&= \Prob\Big(n_J+ n_J^{\frac{1}{\alpha+1}} > n+D \Big| n_J < n+ k_0^\frac{1}{\alpha}  \frac{1}{1-(1-\delta)^\frac{1}{\alpha}} \Big) \Prob\Big( n_J < n+ k_0^\frac{1}{\alpha}  \frac{1}{1-(1-\delta)^\frac{1}{\alpha}}\Big) \\
 		&+ \Prob\Big(n_J+ n_J^{\frac{1}{\alpha+1}} > n+D \Big| n_J \geq n+ k_0^\frac{1}{\alpha}  \frac{1}{1-(1-\delta)^\frac{1}{\alpha}} \Big) \Prob\Big( n_J \geq n+ k_0^\frac{1}{\alpha}  \frac{1}{1-(1-\delta)^\frac{1}{\alpha}}\Big) \\
 		&\leq \Prob\Big( k_0 > c D^\alpha\Big)+n^\alpha p^{n^{1/(\alpha+1)}}.
 	\end{align*}
 	Since by assumption $\Prob\big( k_0 > c D_I^{\alpha-1} n \big) \leq \Prob\big( B_\circ(n-D_I) \nsubseteq \mathcal{I}(b_n)\big) $ is summable, the above probability for any $D \geq D_I^{1-\frac{1}{\alpha}} n^\frac{1}{\alpha}$ is summable as well. Now applying Borel-Cantelli yields
 	\begin{align*}
 		\mathcal{I}(b_n) \subseteq B_\circ\big(n+D\big) \text{ almost surely for $n$ large enough.}
 	\end{align*}
 \end{proof}
 Note that in Section \ref{sectionInnerBound} we have actually shown an inner bound for the stopped cluster $\mathcal{I}_{b_n}(\circ \rightarrow n)$ as is needed here. This is because the considered $M,L$ only depend on the behavior of the random walks before leaving the ball of radius $n$. Therefore \break $\{ z\in \mathcal{I}_{b_n}(\circ \rightarrow n)\} \subseteq \{M=L\}$ holds for any $z\in B_\circ(n)$ as well.
 Applying the theorem to the established inner bound $D_I(n) = c n^\frac{1}{2} \ln(n)^{\frac{1+\kappa}{2\alpha}}$ yields the desired outer bound $$D= D_I^{1-\frac{1}{\alpha}} n^\frac{1}{\alpha} = c\ n^{\frac{1}{2}+\frac{1}{2\alpha}}\ln(n)^{\frac{1+\kappa}{2\alpha}(1-\frac{1}{\alpha})} \text.$$
 
 \section{Conclusion}
 	
	Firstly, to further improve the bounds derived here, it is necessary to consider different techniques. For the inner bound we used the difference between the expected visits in vertices and visits after settlement of the generating random walks. Our lower bound for this difference is already pretty sharp, since to odometer of the divisible sandpile gives the exact solution for this and the bound for the odometer function itself is also sharp for some special vertices. Considering the outer bound, our technique rests on stopping on the boundary of growing balls and the let paused particles develop on the annulus to the next bigger ball. In some sense we consider the balls on whose boundary the particles are stopped to be to be already completely settled in the subsequent process (since in Lemma \ref{outBoundHilfLemma} there is no further assumption on the sets $S\subseteq B_\circ(n)$). Now suppose the ball is a proper triangle of size $n=2^m$ and suppose we have filled the ball perfectly up to $b_{n-D_I}$, where $D_I = 2^k$, then the remaining particles amount to $$b_n-b_{n-D_I} = \frac{2^{m-k}(3^{k+1}+3)}{3^{k+1}+2} b_n \geq c D_I^{\alpha-1} n.$$ The radius to cover all these particles would be at least $D \geq c D_I^{1-\frac{1}{\alpha}} n^\frac{1}{\alpha}$. Therefore this result is also optimal in the sense, that for better bounds one would need to consider different techniques in order to proof these. 
		 
	Secondly, this approach can also be used for the Vicsek graph, whose sandpile cluster is also the exact ball. Here the difficulty is mainly in the additional technicalities and the exact analysis of a normalized odometer function, which is needed since the graph is not regular anymore. We expect this to also generalize to other nested fractals. However, one may need to consider another metric than the graph metric to describe the shape of the cluster, since the harmonic measure on the usual graph metric balls is not uniform anymore. Take a modified Sierpinski gasket graph where instead of three copies in each stage of the construction we take nine copies of previous stage triangles. Here the cluster will grow much faster on the outer middle triangle than on the other ones. Figure \ref{fig:SG3Sim} shows a simulation on the 6th generation of such a modified Sierpinski gasket graph, where the fluctuation is of the size of third generation triangle.
	
	\begin{figure}[h]
	\centering
	
	\includegraphics[width=\linewidth]{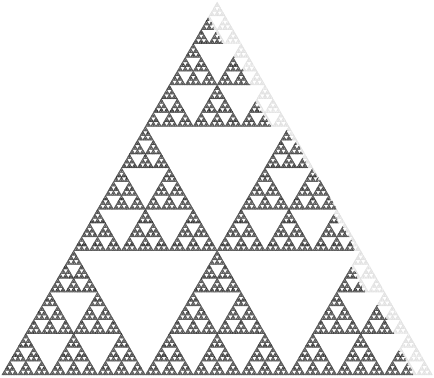}
	\caption{Simulation on the modified Sierpinski gasket graph with nine copies}\label{fig:SG3Sim}
	\end{figure}
	
 \newpage

(N.Heizmann) \textsc{ Technische Universität Chemnitz, Fakult\"at f\"ur Mathematik, Reichenhainer Stra\ss e 41, 09126 Chemnitz, Germany} \newline \textit{Email adress:} \textbf{nico.heizmann@math.tu-chemnitz.de}
\end{document}